\documentclass[review]{elsarticle}

\usepackage{amssymb,amsmath,amsthm}

\usepackage{graphicx}


\usepackage{hyperref}

\usepackage{booktabs}

\usepackage{enumerate}

\newtheorem{theorem}{Theorem}[section]
\newtheorem{lemma}[theorem]{Lemma}

\theoremstyle{definition}
\newtheorem{definition}[theorem]{Definition}

\theoremstyle{remark}
\newtheorem{remark}[theorem]{Remark}

\numberwithin{equation}{section}

\def\p{\partial}
\def\tilde{\widetilde}
\def\hat{\widehat}
\def\bs{\boldsymbol}

\def\ub{\boldsymbol{u}}

\def\vb{\bs{v}}
\def\fb{\bs{f}}
\def\kb{\mathbf{k}}
\def\nb{\mathbf{n}}
\def\tb{\mathbf{t}}
\def\div{\mathrm{div}}
\def\curl{\mathrm{curl}}
\def\CE{\mathrm{CE}}
\def\VE{\mathrm{VE}}
\def\EC{\mathrm{EC}}
\def\EV{\mathrm{EV}}

\def\D{\mathcal{D}}
\def\R{\mathbb{R}}
\def\Ph{\mathcal{P}_h}
\def\Rh{\mathcal{R}_h}

\newcommand{\ra}[1]{\renewcommand{\arraystretch}{#1}}

\date{\today}

\usepackage{lineno,hyperref}
\modulolinenumbers[5]

\journal{Journal of Computational and Applied Mathematics}









\bibliographystyle{elsarticle-num}

\begin{document}

\begin{frontmatter}

\title{Stable and convergent approximation of two-dimensional vector
fields on unstructured meshes }

\author{Qingshan Chen}
\address{Department of Mathematical Sciences, Clemson University,
  Clemson, SC 29631, USA.}
\ead{qsc@clemson.edu}

\begin{abstract}
A new framework is proposed for analyzing staggered-grid finite
difference finite volume methods on unstructured meshes. The new
framework employs the concept of external approximation of function
spaces, and gauge convergence of numerical schemes through the
quantities of vorticity and divergence, instead of individual
derivatives of the velocity components. The construction of a stable
and convergent external 
approximation of a simple but relevant vector-valued function space is
demonstrated, and the new framework is applied to establish the
convergence of the MAC scheme for the incompressible Stokes problem on
unstructured meshes.
\end{abstract}

\begin{keyword}
staggered-grid \sep MAC \sep C-grid \sep incompressible Stokes \sep finite 
  difference \sep finite volume \sep unstructured meshes
\MSC 65N06 \sep 65N08 \sep 65N12 \sep 65N22 \sep 76D07
\end{keyword}

\end{frontmatter}

\linenumbers

\section{Introduction}


In simulations of physical systems, it is often advantageous to
stagger the vectorial variables with the scalar variables. The
resulting schemes are collectively called staggered-grid schemes. 
A classical example of staggered-grid scheme is
the Marker-and-Cell scheme (\cite{Harlow:1965jv}), also known
as the C-grid in the geoscience community (\cite{Arakawa:1977wr}), in
which the mass and other related variables are specified at cell
centers and the normal velocity components are specified at cell
edges. The MAC scheme is widely accepted as the method of choice for
incompressible flows ; see \cite{Wesseling:2001ci} for a review. Since its
introduction, it has also been argued that the scheme is suitable for
flows at all speeds; see the seminal papers by \cite{Harlow:1968ff} and
\cite{Harlow:1971iv}, as well as later developments by 
\cite{Bijl:1998gn,{Issa:1986gi}, {Karki:1989ez}, {vanderHeul:2003gd},
  {Wenneker:2002gt}}. For 
geophysical flows, the C-grid scheme has been shown to be superior in
resolving inertial-gravity dispersive relations; see the seminal paper
by \cite{Arakawa:1977wr}, and recent expositions on this
topic, \cite{Skamarock:2008hr,
  {Thuburn:2008bv},{Gassmann:2011bq},{Gassmann:2012ke}}.
In recent years, to take advantage of the growing power of
supercomputers, there has been a push to extend the C-grid scheme onto
unstructured meshes for complex problems on complex geometric
domains. In this regard, we mention the work by \cite{Thuburn:2009tb,
  {Ringler:2010io}, {Chen:2013bl}}.

In this work we concern ourselves with the analysis of staggered-grid
numerical schemes on unstructured meshes. 
Staggered-grid schemes are mostly constructed using the
finite difference (FD) or finite volume (FV) techniques, and
therefore, with regard to analysis,
they pose the same challenges that classical FD/FV schemes do, namely
the lack of variational formulations and the use of low-order
piecewise constant functions. Staggered-grid schemes on unstructured
meshes pose an extra challenge: 
the normal and/or tangential velocity components specified on the edges
may not align with the canonical directions of
the original vector field.
The last two decades have seen quite some efforts on this topic; the
theory for the MAC-scheme on structured meshes is fairly complete, at
least when classical fluid problems, such as
compressible/incompressible Stokes, are concerned. 
In 1975, roughly 10 years after the MAC scheme was introduced, Girault
(\cite{Girault:1976km}) proposed a finite element method on
``interlaced'' rectangular meshes for the stationary incompressible
Navier-Stokes problem; the method reduces to the classical MAC scheme
when the boundaries of the domain align with the mesh lines and a
special 4-point quadrature rule is used. First-order error estimates
were given for both the velocity and the pressure. 
Through the co-volume approach, Nicolaides and Wu
(\cite{Nicolaides:1996fm})  derives the a priori error estimates of the 
MAC scheme on rectangular meshes for the stationary two-dimensional
incompressible Navier-Stokes problem.
Kanschat (\cite{Kanschat:2008de}) shows that, on
rectangular meshes, the MAC scheme for incompressible flows is
algebraically equivalent to the divergence-conforming discontinuous
Galerkin method based on the lowest order Raviart-Thomas
elements. 
Eymard et.~al.~(\cite{Eymard:2010uj})
perform the stability and convergence analysis of the MAC scheme
for the two- and three-dimensional compressible Stokes problem on
rectangular meshes. Ch\'enier et.~al.~(\cite{Chenier:2012ty}) consider a variational
extension of the MAC to the full Navier Stokes equations on
semi-regular, non-conforming, and locally refined meshes. E and Liu (\cite{EL02})
analyze the MAC scheme for the two-dimensional time-dependent Navier-Stokes
equations, again, on rectangular meshes. 
The situation on unstructured meshes is quite different. The only work
known to us on this topic is Nicolaides (\cite{Nicolaides:1992vs}), who derives the
a priori error estimates for the MAC scheme for the incompressible
Stokes problem on unstructured meshes. 
We should point out that Chou (\cite{Chou:1997up}) derives the a priori error
estimates for MAC-like schemes for 
generalized Stokes on triangular meshes. But the schemes are
constructed by approximating both of the canonical velocity components
with piecewise linear Petrov-Galerkin elements, and thus are different
from the type of schemes considered here.

We aim to develop a new theoretical framework for
analyzing staggered-grid schemes for a wide range of
fluid problems. There are two essential ingredients to this new
framework. The first is the concept of external approximation of
function spaces, which was proposed by C\'ea (\cite{Cea:1964vy}), and
extensively used by Aubin (\cite{Aubin:1972wv}) and Temam
(\cite{Temam:2001wj}). 
This concept has been used recently by several authors to study the
convergence of non-staggered finite volume schemes
(\cite{Faure:2006ky,{Gie:2015tw}}). 
Formal definitions will be given in the next
section. Briefly speaking, external approximation adds an auxiliary
function space $F$ alongside the original function space $V$ and the
discrete function space $V_h$ (see Figure \ref{fig:external}). With the
aid of several mappings defined between these function spaces,
elements from $V_h$, which are often discontinuous, can now be compared
with elements from $V$ in the auxiliary space $F$.
The second ingredient of our new framework is the use of vorticity and
divergence to gauge the convergence of the numerical schemes. This is
a direct reflection of the fact that staggered-grid schemes are best
at mimicking the vorticity and/or divergence, but not the canonical
components of the velocity field, or any of its gradients in the
canonical directions. 

The framework is general enough to be applicable to different types of
staggered-grid schemes (MAC, co-volume, etc.), and potentially to a
wide range of fluid problems (compressible/incompressible Stokes,
shallow water equations, etc.)  
The goal of the current article is to present the analysis framework
and to apply it to the first case of interest, the classical
incompressible Stokes problem. The existence and uniqueness of a
discrete solution, and its convergence to the true solution are
established.  

After we have completed this work, we were made ware that very
similar results have been obtained in \cite{Eymard:2014bq}. But the
current work and the cited work differ in the approaches taken. We
utilize an external approximation framework for vector
fields for the convergence analysis, while \cite{Eymard:2014bq} rely 
on a strong reconstruction operator (to reconstruct the velocity field
from either the tangential or normal velocity components) and a
compactness result. They also prove the convergence for the pressure
field for one version of the MAC schemes, which comes as an extra
bonus of their approach. The same issue is not discussed in our work,
because the pressure field disappear in the variational form for the
problem. On the other hand, it appears that the results of
\cite{Eymard:2014bq} only apply to triangular-Delaunay meshes, while
ours apply to arbitrarily unstructured staggered grids. Due to these
differences, we are comfortable in publishing this work.

 The rest of the article is arranged as
follows. In Section 
\ref{sec:appr-vect-fields}, we recall the definitions of external
approximations, and present the framework for constructing and
analyzing external approximations of vector fields on unstructured
meshes. In Section \ref{sec:line-incompr-stok}, we apply the framework
to analyze the MAC scheme for the incompressible Stokes
problem. We finish in Section \ref{sec:concluding-remarks} with some
concluding remarks concerning the current work and future plans.

\section{Approximation of vector fields}\label{sec:appr-vect-fields} 
In the study of partial differential equations (PDEs) governing physical
systems, such as fluids, various vector-valued function spaces may
appear as the natural setting of the problems. These function spaces
usually differ in the level of the regularity and boundary
behaviors. We let $\Omega$ be a bounded and simply connected domain on
the two-dimensional plane with piecewise smooth boundaries, and in this
section, we consider the vector-valued function space 
\begin{equation}
\label{eq:173}
  V = H^\div_0(\Omega)\cap H^\curl(\Omega).
\end{equation}
Here $H^\div_0(\Omega)$ is a space of square-integrable vector-valued
functions whose divergence is also square-integrable, and whose normal
component vanishes on the boundary (see \cite{Girault:1986vn} for
details). Similarly, $H^\curl(\Omega)$ denotes a space of
square-integrable vector-valued functions whose curl is also
square-integrable. By \cite[Proposition 3.1]{Girault:1986vn}, the
space $V$ is algebraically and topologically included in the space
$H^1(\Omega)$, and in addition, the $H^1$-norm of functions from $V$
can be majorized by the $L^2$-norms of their divergence and
curl. Thus, $V$ is a Hilbert space with norm
\begin{equation}
 \label{eq:12}
   \|\ub\|_V^2 =   |\div\ub|_0^2 + |\curl\ub|_0^2.
 \end{equation}

In this section we present a discrete approximation to the function
space $V$. We choose to work on
this function space because it is quite general but still relevant in
the study of fluids. The homogeneous boundary condition on the normal
velocity component corresponds to no-flux boundary condition, which is
desirable for both viscous and inviscid fluids in closed domains. 
The discrete approximation to this function space appears to be the
most general setting where many of the discrete vector field theories
can be established, such as the Helmholtz decomposition theorem. These
results will be needed in dealing with specific problems, even though
the function spaces may differ.

The approximation that we are about to present is
stable and convergent. But before we present the discrete
approximations, we need to first recall the definitions of
approximations of function spaces, and the concepts of stability and
convergence. 


\subsection{Definitions}\label{sec:definitions}
Here we recall the definitions of external approximations
of normed spaces. Detailed expositions on this topic can be found in
\cite{Temam:1980wr,Cea:1964vy,Aubin:1972wv}. Let $V$ be a linear
function space with norm $\|\cdot\|$.

\begin{definition}\label{def:ext-approx}
  An external approximation of a normed space $V$ is a set consisting
  of
  \begin{enumerate}[\indent (a)]
  \item a normed space $F$ and an isomorphism $\Pi$ from $V$ into $F$;
    \item a family of triplets $\{V_h,\, \Ph,\,
      \Rh\}_{h\in\mathcal{H}}$, in which for each $h$, $V_h$ is a
      normed space, $\Ph$ a continuous linear mapping of $V_h$ into
      $F$, $\Rh$ a (possibly nonlinear) mapping of $V$ into $V_h$.
  \end{enumerate}
\end{definition}
The relations between the normed spaces and the operators in an
external approximation are shown in Figure \ref{fig:external}.
\begin{figure}[h]
  \centering
    \scalebox{0.7}{\includegraphics{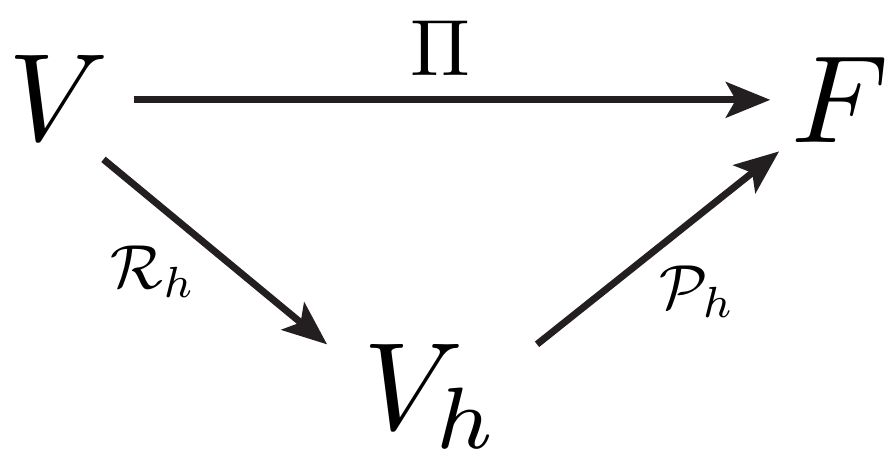}}
  \caption{External approximation}
  \label{fig:external}
\end{figure}

The restriction operators may be nonlinear, and, as a consequence, it
is not possible 
to define their norms. 
The prolongation operator $\Ph$ is linear, and its stability and the
stability of the external approximation are defined as follows.
\begin{definition}\label{def:stable-ext}
  The prolongation operators $\Ph$ are called stable if their norms
$$\|\Ph\| = \sup_{\stackrel{u_h\in V_h}{ \|u_h\|_h=1}} \|\Ph u_h\|$$ 
can be majorized independently of $h$. The external approximation of
$V$ is sable if the prolongation operators $\Ph$ are stable.
\end{definition}

\begin{definition}\label{def:conv-ext}
  An external approximation of a normed space $V$ is convergent if
  \begin{enumerate}[\indent ({C}1)]
  \item  for   all $u\in V$, 
    \begin{equation}
      \label{eq:81}
\lim_{h\rightarrow 0} \Ph \Rh u = \Pi u
    \end{equation}
in the strong topology of $F$;
\item a sequence $\Ph u_{h}$ converging to
  some element $\phi$ in the weak topology of $F$ implies that $\phi =
  \Pi u$ 
  for some $u\in V$.
  \end{enumerate}
\end{definition}

In practice, it may be difficult to
explicitly define the restriction operator $\Rh$ for every function in
$V$. As it turns out, $\Rh$ only needs to be specified for a dense
subspace $\mathcal{V}$. If condition (C1) holds for every
$\ub\in\mathcal{V}$, then the definition of $\Rh$ can be extended,
possibly in nonlinear fashion, to the whole space of $V$ so that the
condition holds for all $\ub\in V$. For a proof, the reader is
referred to \cite[Section
3.4]{Temam:1980wr}. 

\subsection{Specification of the mesh}\label{sec:specification-mesh}
\begin{figure}[h]
  \centering
  \includegraphics[width=\textwidth]{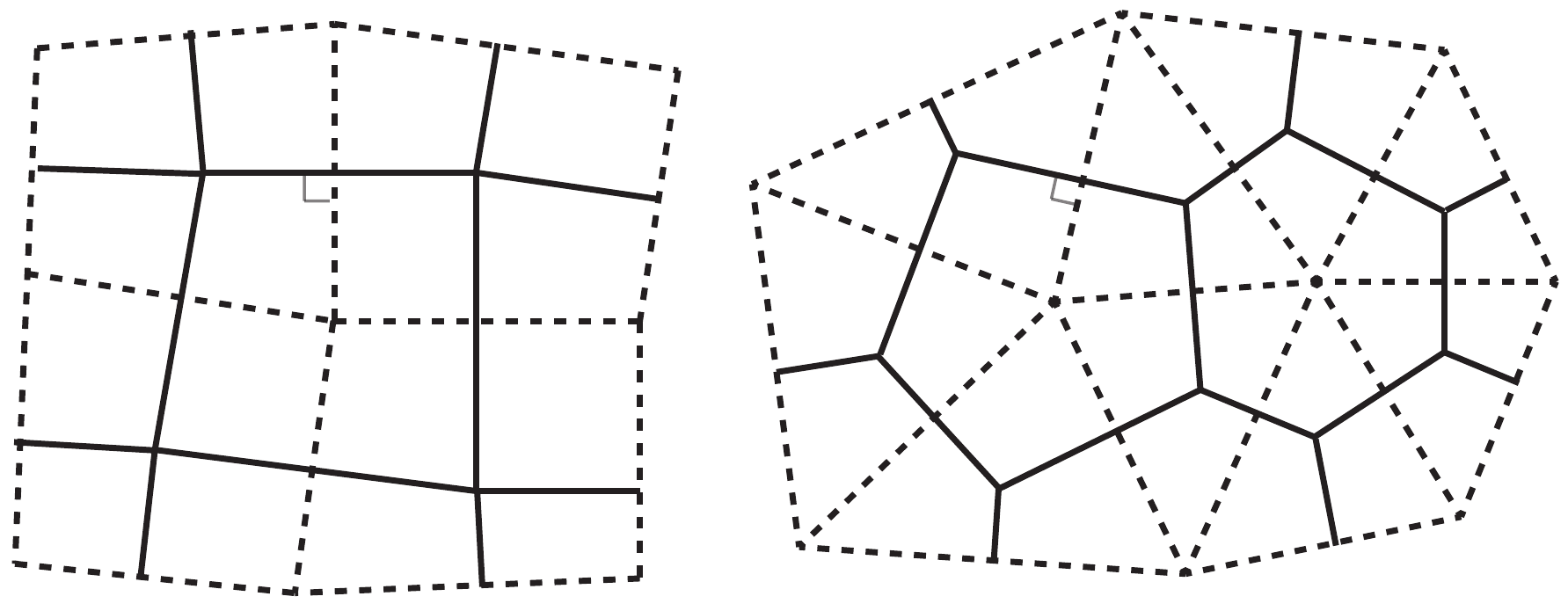}
  \caption{Examples of staggered grid. Left: a quadrilateral staggered
  grid. Right: a Delaunay-Voronoi staggered grid.}
  \label{fig:quad-dv}
\end{figure}

  
Our approximation of the function space is based on discrete meshes
that consist of polygons. To avoid potential technical issues with the boundary, we
shall assume that the domain $\Omega$ itself is polygonal. We make use
of a pair of staggered meshes, with one called primary and the other called
dual. The meshes consist of polygons, called cells, of arbitrary
shape, but conforming to the requirements to be specified. The centers
of the cells on the primary mesh are the vertices of the cells on the
dual mesh, and vice versa. The edges of the primary cells intersect
{\it orthogonally} with the edges of the dual cells. The line segments
of the boundary $\partial\Omega$ pass through the centers of the
primary cells that border the boundary. Thus the primary cells on the
boundary are only partially contained in the domain. 
Shown in Figure \ref{fig:quad-dv} are two common types of staggered
grids: a quadrilateral-quadrilateral staggered grid (left), and a
Delaunay-Voronoi tessellation (right). 

\begin{figure}[h]
  \centering
  \scalebox{0.65}{\includegraphics{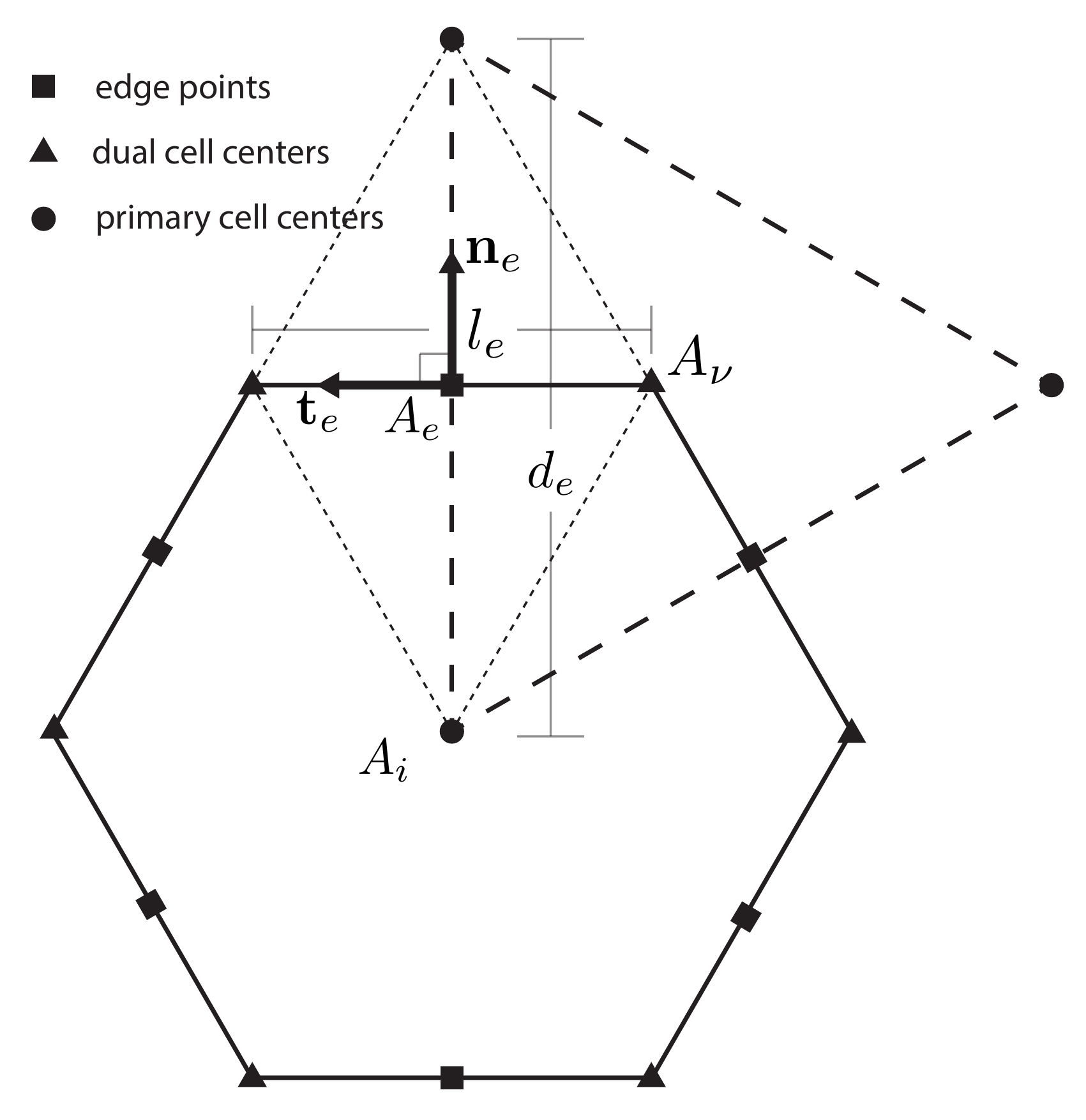}}
  \caption{Notations}
  \label{fig:notations}
\end{figure}

\begin{table}[h]
\centering
\ra{1.3}
\caption{Sets of elements defining the connectivity of an unstructured
  dual grid.}
\vspace{4mm}
\begin{tabular}{@{}ll@{}}\toprule
Set & Definition\\
\midrule
EC($i$) & Set of edges defining the boundary of primary cell $A_i$\\
VC($i$) & Set of dual cells that form the vertices primary cell $A_i$\\
CE($e$) & Set of primary cells boarding edge $e$\\
VE($e$) & Set of dual cells boarding edge $e$\\
CV($\nu$) & Set of primary cells that form vertices of dual cell $D_\nu$\\
EV($\nu$) & Set of edges that define the boundary of dual cell $D_\nu$\\
\bottomrule
\end{tabular}
\label{ta1}
\end{table}

In order to construct function spaces on this type of meshes, some
notations are in order, for which we follow the conventions made in
\cite{Ringler:2010io,Chen:2013bl}. As shown in the diagram in Figure
\ref{fig:notations}, the primary cells are denoted as $A_i,\, 1\leq
i\leq N_c + N_{cb}$, where $N_c$ denotes the number of cells that are
in the interior of the domain, and $N_{cb}$ the number of cells that
are on the boundary. We assume the cells are numbered so that $A_i$
with $1\leq i\leq N_c$ refer to interior cells.  The dual cells, which
all lie inside the domain, are denoted as $A_\nu,\,1\leq \nu\leq
N_v$. When no 
confusion should arise, we also use $A_i$ and $A_\nu$ to denote the
areas of the primary cells and dual cells, respectively. Each primary
cell edge corresponds to a distinct dual cell edge, and vice
versa. Thus the primary and dual cell edges share a common index $e,\,
1\leq e\leq N_e+N_{eb}$, where $N_e$ denotes the number of edge pairs
that lie entirely in the interior of the domain, and $N_{eb}$ the
number of edge pairs on the boundary, i.e., with dual cell edge
aligned with the boundary of the domain. Again, we assume that $1\le
e\le N_e$ refer to interior edges.
 Upon the edge pair $e$, the distance between the two
primary cell centers, which is also the length of the corresponding
dual cell edge, is denoted as $d_e$, while the distance between the
two dual cell centers, which is also the length of the corresponding
primary cell edge, is denoted as $l_e$. 
These two edges form the diagonals of a diamond-shaped region, whose
vertices consist of the two neighboring primary cell centers and the
two neighboring dual centers. The diamond-shaped region is also
indexed by $e$, and will be referred to as $A_e$.
The Euler formula for planar
graphs states that the number of primary cell centers $N_c + N_{cb}$, the
number of vertices (dual cell centers) $N_v$, and the number of
primary or dual cell edges $N_e + N_{eb}$ must satisfy the relation
\begin{equation}
  \label{eq:13}
  N_c + N_{cb} + N_v = N_e + N_{eb} + 1.
\end{equation}
The connectivity information of the unstructured staggered meshes is
provided by six {\it sets of elements} defined in Table \ref{ta1}. 

For each edge pair, a unit vector $\nb_e$, normal to the primary cell
edge, is specified. A second unit vector $\tb_e$ is defined as
\begin{equation}
  \label{eq:14}
  \tb_e = \kb\times\nb_e,
\end{equation}
with $\kb$ standing for the upward unit vector. Thus $\tb_e$ is
orthogonal to the dual cell edge, but tangent to the primary cell
edge, and points to the vertex on the left side of $\nb_e$. For each
edge $e$ and for each $i\in \CE(e)$ (the set of cells on edge $e$, see
Table \ref{ta1}), we define the direction indicator
\begin{equation}
\label{eq:15}
  n_{e,i} = \left\{
  \begin{aligned}
    1& & &\phantom{sssssss}\textrm{if }\nb_e\textrm{ points away from primary cell
    }A_i,\\
    -1& &  &\phantom{sssssss}\textrm{if }\nb_e\textrm{ points towards primary cell
    }A_i,\\
  \end{aligned}\right.
\end{equation}
and for each $\nu\in \VE(e)$,
\begin{equation}
\label{eq:115}
  t_{e,\nu} = \left\{
  \begin{aligned}
    1& & &\phantom{sssssss}\textrm{if }\tb_e\textrm{ points away from dual cell
    }A_\nu,\\
    -1& &  &\phantom{sssssss}\textrm{if }\tb_e\textrm{ points towards dual cell
    }A_\nu.\\
  \end{aligned}\right.
\end{equation}

For this study, we make the following regularity assumptions on the
meshes. We assume that the diamond-shaped region $A_e$ is actually
convex. In other words, the intersection point of each edge pair falls
inside each of the two edges. We also assume that the meshes are
quasi-uniform, in the sense that there exists $h>0$ such that, for 
each edge $e$,
\begin{equation}
  \label{eq:16}
  mh\leq l_e,\,d_e \leq Mh
\end{equation}
for some fixed constants $(m,\,M)$ that are independent of the
meshes. 
The staggered dual meshes are thus designated by $\mathcal{T}_h$.

\subsection{Discrete scalar fields}\label{sec:discr-scal-fields}
For each $1\leq i\leq N_c + N_{cb}$, let $\chi_i$ be the characteristic
function with support on cell $i$, that is,
\begin{equation}
  \label{eq:17}
  \chi_i(x) = \left\{
    \begin{aligned}
      &1 &\textrm{ if } x\in A_i,\\
      &0 & \textrm{ otherwise.}
    \end{aligned}\right.
\end{equation}
For each $1\leq \nu\leq N_v$, we let $\chi_\nu$ be the characteristic
function with support on dual cell $\nu$, that is,
\begin{equation}
\label{eq:18}
  \chi_\nu(x) = \left\{
    \begin{aligned}
      &1 &\textrm{ if } x\in A_\nu,\\
      &0 & \textrm{ otherwise.}
    \end{aligned}\right.
\end{equation}

We define $\Phi_h$ to be a space of scalar fields associated with the
primary mesh,
\begin{equation}
  \label{eq:19}
  \Phi_h = \left\{ \varphi_h = \sum_{i=1}^{N_c + N_{cb}} \varphi_i\chi_i, \textrm{
      with } \{\varphi_i\}_{i=1}^{N_c + N_{cb}}\in \mathbb{R}^{N_c + N_{cb}}\right\}.
\end{equation}
It is a Hilbert space endowed with the discrete $L^2$-norm
\begin{equation}
  \label{eq:20}
  \|\varphi_h\|_{\Phi_h}^2 \equiv |\varphi_h|_0^2 = \sum_{i=1}^{N_c + N_{cb}}A_i
  \varphi_i^2. 
\end{equation}

We define $\Psi_h$ to be a space of scalar fields associated with the
dual mesh,
\begin{equation}
\label{eq:21}
  \Psi_h = \left\{ \psi_h = \sum_{\nu=1}^{N_\nu} \psi_\nu\chi_\nu, \textrm{
      with } \{\psi_\nu\}_{\nu=1}^{N_\nu}\in \mathbb{R}^{N_\nu}\right\}.
\end{equation}
It is a Hilbert space endowed with the discrete $L^2$-norm
\begin{equation}
\label{eq:22}
  \|\psi_h\|_{\Psi_h}^2 \equiv |\psi_h|_0^2 = \sum_{\nu=1}^{N_\nu}A_\nu
  \psi_\nu^2. 
\end{equation}

Gradient operators can be defined on scalar fields from $\Phi_h$ and
$\Psi_h$, using the direction indicators $n_{e,i}$ and $t_{e,\nu}$,
respectively.  
On each edge $e$, the
discrete gradient operator on $\varphi_h\in\Phi_h$ is defined as
\begin{equation}
  [\nabla_h\varphi_h]_e = \dfrac{-1}{d_e}\sum_{i\in \CE(e)}\varphi_i n_{e,i},\label{eq:34}
 \end{equation}
and the skewed discrete gradient operator on $\psi_h\in\Psi_h$ is
defined as
\begin{equation}
  [\tilde\nabla_h^{\perp}\psi_h]_e = \dfrac{1}{l_e}\sum_{\nu\in
    \VE(e)}\psi_\nu t_{e,\nu}.\label{eq:35} 
 \end{equation}
The situation on the boundary requires some comments. With
each boundary edge, only one vertex is associated. Hence on a
boundary edge $e$, the definition \eqref{eq:35} can be written as
\begin{equation}
  [\tilde\nabla_h^{\perp}\psi_h]_{e\textrm{ on boundary}} =
  \dfrac{1}{l_e}\psi_\nu t_{e,\nu},\label{eq:40} 
 \end{equation} 
where $\nu$ is the single element in $\VE(e)$. This amounts to
implicitly requiring that $\psi_h$ vanishes on the boundary.
We let 
\begin{align} 
  &\nabla_h \varphi_h = \sum_{e=1}^{N_e + N_{eb}} [\nabla_h\varphi_h]_e \chi_e
  \nb_e,\label{eq:36}\\ 
  &\tilde\nabla_h^{\perp} \psi_h = \sum_{e=1}^{N_e + N_{eb}}
  [\tilde\nabla_h^{\perp}\psi_h]_e \chi_e \nb_e.\label{eq:37} 
\end{align}

With the gradient operators, semi-$H^1$ norms can be defined as
well. For $\varphi_h\in \Phi_h$, and $\psi_h\in \Psi_h$, we define
\begin{align}
  |\varphi_h|_{1,h} &\equiv |\nabla_h \varphi_h|_{0,h},  \label{eq:82}\\
 |\psi_h|_{1,h} &\equiv |\tilde\nabla^{\perp}_h \psi_h |_{0,h}.\label{eq:83}
\end{align}
These semi-$H^1$ norms can actually be taken as norms for the
corresponding function spaces, thanks to the discrete Poincar\'e
inequalities. We denote by $\dot{\Phi}_h$ the subspace of $\Phi_h$
that has zero average. 
\begin{lemma}[Discrete Poincar\'e inequalities for scalar
  fields]\label{lem:disc-poincare-scalar} 
For $\varphi_h\in\dot{\Phi}_h$ and $\psi_h\in\Psi_h$,
\begin{align}
  |\varphi_h|_{0,h} &\le C |\varphi_h|_{1,h},\label{eq:84}\\
|\psi_h|_{0,h} &\le C |\psi_h|_{1,h}.\label{eq:85}
\end{align}
In the above, $C$ stands for some generic constants that depend on the 
domain $\Omega$ only. 
\end{lemma}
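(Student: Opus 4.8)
The plan is to establish both inequalities by a compactness/contradiction argument, which is the standard route for discrete Poincaré estimates on general meshes where explicit constants are hard to track, combined with the key observation that the discrete gradients defined in \eqref{eq:34}--\eqref{eq:35} control differences of nodal values across edges. Alternatively, and more constructively, one can mimic the classical continuous proof: for the primary-mesh estimate \eqref{eq:84}, extend $\varphi_h$ by zero outside $\Omega$, write the value $\varphi_i$ at a cell as a telescoping sum of jumps $\varphi_j-\varphi_k$ along a chain of cells connecting $A_i$ to a reference cell (or to the exterior, exploiting the zero-average constraint via a fixed reference cell of comparable measure), bound each jump by $d_e|[\nabla_h\varphi_h]_e|$, and then sum using Cauchy--Schwarz together with the quasi-uniformity bound \eqref{eq:16} to convert counts of edges into factors of $h$ and the area weights $A_i$ in \eqref{eq:20}. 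For \eqref{eq:85} the argument is the same but uses the dual cells $A_\nu$, the skewed gradient $[\tilde\nabla_h^\perp\psi_h]_e$ from \eqref{eq:35}, and crucially the convention embodied in \eqref{eq:40} that $\psi_h$ vanishes on the boundary; this homogeneous boundary value plays the role that the zero-average plays for $\varphi_h$, so no quotient space is needed on the dual side.

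First I would fix the primary-mesh case. Pick a reference cell $A_{i_0}$. For any cell $A_i$, choose a path $i_0 = i_1, i_2, \dots, i_m = i$ through cells sharing edges $e_1,\dots,e_{m-1}$; then $\varphi_i - \varphi_{i_0} = \sum_{k} \pm d_{e_k}[\nabla_h\varphi_h]_{e_k}$ by definition \eqref{eq:34}. Squaring, applying Cauchy--Schwarz over the at most $O(1/h)$ edges in the path (using that $\Omega$ is bounded and the mesh quasi-uniform), and using \eqref{eq:16} to replace each $d_{e_k}^2$ by $O(h^2)$, one gets $|\varphi_i-\varphi_{i_0}|^2 \lesssim h \sum_{e\in\text{path}} |[\nabla_h\varphi_h]_e|^2 \cdot \tfrac{1}{h}\cdot h^2$ — more carefully, $|\varphi_i-\varphi_{i_0}|^2 \lesssim \big(\sum d_{e_k}\big)\big(\sum d_{e_k}|[\nabla_h\varphi_h]_{e_k}|^2\big) \lesssim \mathrm{diam}(\Omega)\, \|\nabla_h\varphi_h\|^2/h \cdot h$, i.e. uniformly bounded by $C|\varphi_h|_{1,h}^2$ after multiplying the edge sum by the missing area weights $A_e \sim h^2$. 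Then $|\varphi_i|^2 \le 2|\varphi_i-\varphi_{i_0}|^2 + 2|\varphi_{i_0}|^2$, multiply by $A_i$, sum over $i$, and use $\sum_i A_i = |\Omega|$ to absorb the first term; the term $|\varphi_{i_0}|^2$ is controlled by the zero-average condition $\sum_i A_i\varphi_i = 0$, which lets one bound $|\varphi_{i_0}|$ in terms of the differences $\varphi_{i_0}-\varphi_i$ already estimated. This yields \eqref{eq:84} with $C = C(\Omega)$.

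The main obstacle I anticipate is making the telescoping-path argument genuinely mesh-independent: one must verify that on any admissible mesh satisfying \eqref{eq:16} the connecting paths can be chosen with length $O(\mathrm{diam}(\Omega)/h)$ and bounded overlap (so that summing the path estimates over all target cells does not accumulate an uncontrolled combinatorial factor), and one must handle the boundary cells in Figure~\ref{fig:notations}, which are only partially inside $\Omega$, so that their areas $A_i$ and the adjacent edge lengths still satisfy the quasi-uniformity comparison. A clean way around the overlap/counting subtlety is to invoke the external-approximation and compactness machinery: if one instead argues by contradiction, assuming a sequence $\varphi_h^{(n)}$ with $|\varphi_h^{(n)}|_{0,h}=1$ but $|\varphi_h^{(n)}|_{1,h}\to 0$, then the uniform $L^2$ and discrete-$H^1$ bounds give a weak limit which, by the convergence property (C2) of the external approximation (Definition~\ref{def:conv-ext}), must be a genuine $H^1(\Omega)$ function with zero gradient and zero average, hence zero — contradicting the unit-norm normalization. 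The dual-mesh inequality \eqref{eq:85} follows identically, with the zero boundary trace supplied by \eqref{eq:40} replacing the zero-average normalization. I would present the constructive proof for \eqref{eq:84}--\eqref{eq:85} and remark that the compactness argument gives an alternative.
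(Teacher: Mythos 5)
The paper does not actually prove this lemma; it defers to \cite{Eymard:2000tt} and records only the key idea: reconstruct the value of the scalar field at a point by summing its discrete derivatives along a \emph{straight line in a fixed direction} until the line exits $\Omega$, and then integrate over $\Omega$. That choice of ``path'' is not cosmetic --- it is exactly what resolves the overlap/counting obstacle you correctly identify in your telescoping argument. With straight lines, the total contribution of a given edge $e$ over all starting points $x$ is controlled by an exact measure-theoretic count (the set of $x$ whose ray crosses $e$ has measure at most $l_e\cdot\mathrm{diam}(\Omega)$, weighted by the angle between $\nb_e$ and the chosen direction), and the orthogonality of the mesh makes $d_e[\nabla_h\varphi_h]_e$ precisely the jump of $\varphi_h$ across $e$. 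With arbitrary graph paths to a reference cell, as in your sketch, you must prove that the paths can be chosen with uniformly bounded overlap, and you leave that step open.

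The fallback you offer to close that gap --- a contradiction/compactness argument invoking condition (C2) of Definition \ref{def:conv-ext} --- does not work as stated, for two reasons. First, a sequence $\varphi_h^{(n)}$ with $|\varphi_h^{(n)}|_{0,h}=1$ and $|\varphi_h^{(n)}|_{1,h}\to 0$ may converge weakly in $L^2$ to $0$; weak convergence to zero is perfectly compatible with unit norm, so identifying the weak limit as the zero function produces no contradiction. To conclude you would need \emph{strong} $L^2$ compactness of the sequence (a discrete Rellich theorem), and in the finite-volume literature that is itself proved via translation estimates whose proof is essentially the same fixed-direction argument as the Poincar\'e inequality --- so the shortcut is circular. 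Second, condition (C2) in this paper is a property of the external approximation of the \emph{vector} space $V$ via $(V_h,\Ph,\Rh)$; it says nothing about scalar fields in $\Phi_h$ or $\Psi_h$ and their discrete gradients \eqref{eq:34}--\eqref{eq:35}, and in any case the paper establishes it only later, downstream of this lemma. The honest conclusion is that your constructive route needs the fixed-direction line-integration device of \cite{Eymard:2000tt} (together with its separate, more delicate treatment of the zero-mean case \eqref{eq:84} versus the Dirichlet case \eqref{eq:85}, where \eqref{eq:40} supplies the boundary values), and your compactness route needs an independent discrete compactness theorem that is not available here.
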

The proofs of these  inequalities can be found in
\cite{Eymard:2000tt}. The proofs are quite technical, due to the lack
of a global Cartesian coordinate system. The main idea is to construct
the values of a scalar variable from its discrete derivatives along an
arbitrary but fixed direction. The dimension of the domain $\Omega$ along
that direction is finite, by assumption. For details, the reader is referred to
\cite{Eymard:2000tt}.

\subsection{Discrete vector fields}\label{sec:discr-vect-fields}
For each $1\leq e\leq N_e + N_{eb}$, we let $\chi_e$ be the characteristic
function with support on the diamond-shaped region $A_e$ (see Figure
\ref{fig:notations}, i.e. 
\begin{equation}
\label{eq:23}
  \chi_e(x) = \left\{
    \begin{aligned}
      &1 &\textrm{ if } x\in A_e,\\
      &0 & \textrm{ otherwise.}
    \end{aligned}\right.
\end{equation}
We define $V_h$ to be a space of discrete vector-fields that equal a
constant vector on each $A_e$. Specifically, 
\begin{equation}
  \label{eq:24}
  V_h = \left\{ u_h = \sum_{e=1}^{N_e + N_{eb}}u_e\chi_e \nb_e\right\}.
\end{equation}
We recall that $\nb_e$ is a unit vector normal to the primary
cell edge $e$. 

Around each primary cell $i$, a discrete divergence operator can be
defined, per the divergence theorem,
\begin{equation}
  \label{eq:26}
  \left[\nabla_h \cdot u_h\right]_i = \dfrac{1}{A_i}\sum_{e\in \EC(i)}u_e l_e n_{e,i}.
\end{equation}
It is worth noting that, on partial cells on the boundary, the
summation on the right-hand side only includes fluxes across the edges
that are inside the domain and the partial edges that intersect with
the boundary, and this amounts to imposing a no-flux
condition across the boundary.
It is clear from the definition \eqref{eq:26} that the image of the discrete divergence operator
$\nabla_h\cdot(\,)$ on each $u_h\in V_h$ is a scalar field in $\Phi_h$,
\begin{equation}
  \label{eq:27}
  \nabla_h\cdot u_h = \sum_{i=1}^{N_c + N_{cb}}\left[\nabla_h \cdot
    u_h\right]_i\chi_i    \quad\in\Phi_h,
\end{equation}
and the mapping is linear.
Around each dual cell $\nu$, a discrete curl operator can be defined,
per Stokes' theorem, 
\begin{equation}
\label{eq:28}
  \left[\tilde\nabla_h \times u_h\right]_\nu = \dfrac{-1}{A_\nu}\sum_{e\in \EV(\nu)}u_e d_e t_{e,\nu}.
\end{equation}
The tilde atop $\nabla$ signifies the involvement of the dual cells.
Thus, the image of the discrete curl operator
$\tilde\nabla_h\times(\,)$ on each $u_h\in V_h$ is a scalar field in $\Psi_h$,
\begin{equation}
\label{eq:29}
  \tilde\nabla_h\times u_h = \sum_{\nu=1}^{N_v}\left[\tilde\nabla_h \times
    u_h\right]_\nu\chi_\nu    \quad\in\Psi_h,
\end{equation}
and the mapping is linear.

$V_h$ is a finite dimensional Hilbert space under the 
discrete $L^2$-norm 
\begin{equation}
  \label{eq:30}
  |u_h|_{0,h}^2 \equiv \sum_{e=1}^{N_e + N_{eb}} A_e u_e^2.
\end{equation}
The discrete semi-$H^1$-norm on $V_h$ is defined as 
\begin{equation}
  \label{eq:31}
  |u_h|_{1,h}^2 \equiv |\nabla_h\cdot u_h|_{0,h}^2 + |\tilde\nabla_h\times u_h|_{0,h}^2.
\end{equation}
$V_h$ is a finite dimensional Hilbert space endowed with norm
\begin{equation}
  \label{eq:32}
\Vert u_h\|_{V_h}^2 = |u_h|_{0,h}^2 + |u_h|_{1,h}^2.  
\end{equation}
In fact, the semi-$H^1$ norm \eqref{eq:31} can also be taken as the
norm for $V_h$, thanks to a Poincar\'e-type inequality, which will be
presented after we state and prove a few basic properties for the discrete
divergence and curl operators. 

Given the definitions of the norm \eqref{eq:31} for $V_h$ and the norms
\eqref{eq:20} and \eqref{eq:22} for $\Phi_h$ and $\Psi_h$,
respectively, it is clear that the the discrete divergence operator
and the discrete curl operator
\begin{align}
  \nabla_h\cdot(\,) : \, &V_h\xrightarrow{\hspace{1.5cm}} \Phi_h,\label{eq:25}\\
  \tilde\nabla_h\times(\,) :\, &V_h\xrightarrow{\hspace{1.5cm}} \Psi_h,\label{eq:33}
\end{align}
are bounded linear operators.
From the definitions \eqref{eq:36} and \eqref{eq:37}, 
it is clear that $\nabla_h(\,)$ and $\tilde\nabla^\perp_h(\,)$ are
linear operators from $\Phi_h$ and $\Psi_h$, respectively, into $V_h$,
that is,
\begin{align}
  \nabla_h(\,) : \, &\Phi_h\xrightarrow{\hspace{1.5cm}} V_h,\label{eq:38}\\
  \tilde\nabla_h^\perp(\,) :\, &\Psi_h\xrightarrow{\hspace{1.5cm}} V_h.\label{eq:39}
\end{align}
They can be viewed as the ``adjoint operators'' of the discrete
divergence operator and the discrete curl operator, respectively,
thanks to the following discrete integration-by-parts formulae.
\begin{lemma}\label{lem:integ-by-parts}
  For $u_h\in V_h$, $\varphi_h\in\Phi_h$, and $\psi_h\in\Psi_h$, the
  following relations hold,
  \begin{align}
    \label{eq:41}
    \left( u_h,\,\nabla_h\varphi_h\right)_{0,h} &=
    -\dfrac{1}{2}\left(\nabla_h\cdot u_h,\,\varphi_h\right)_{0,h},\\
    \left( u_h,\,\tilde\nabla_h^\perp\psi_h\right)_{0,h} &=
    -\dfrac{1}{2}\left(\tilde\nabla_h\times u_h,\,\psi_h\right)_{0,h}.\label{eq:42}
  \end{align}
\end{lemma}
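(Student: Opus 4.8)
The plan is a direct computation that expands both sides in terms of nodal values and reorganizes a double sum by interchanging the order of summation. I would begin with the left side of \eqref{eq:41}. Since $u_h$ and $\nabla_h\varphi_h$ both belong to $V_h$, hence are constant on each diamond $A_e$ and aligned with $\nb_e$, the $V_h$ inner product reduces to $\left(u_h,\nabla_h\varphi_h\right)_{0,h}=\sum_e A_e\,u_e\,[\nabla_h\varphi_h]_e$. Substituting the definition \eqref{eq:34} turns this into $-\sum_e \tfrac{A_e}{d_e}\,u_e\sum_{i\in\CE(e)}\varphi_i\,n_{e,i}$.

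The key geometric fact is that the primary-cell edge (of length $l_e$) and the dual-cell edge (of length $d_e$) are the two diagonals of the convex quadrilateral $A_e$ and, by construction of the staggered meshes, they are orthogonal; hence $A_e=\tfrac12\,d_e\,l_e$, so $A_e/d_e=\tfrac12\,l_e$. This is precisely the origin of the factor $\tfrac12$ in the statement. The expression above becomes $-\tfrac12\sum_e\sum_{i\in\CE(e)} l_e\,u_e\,\varphi_i\,n_{e,i}$, and interchanging the order of summation via the combinatorial equivalence $i\in\CE(e)\iff e\in\EC(i)$ gives $-\tfrac12\sum_i \varphi_i\sum_{e\in\EC(i)} l_e\,u_e\,n_{e,i}$. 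By the definition \eqref{eq:26} of the discrete divergence the inner sum equals $A_i\,[\nabla_h\cdot u_h]_i$, so the whole quantity is $-\tfrac12\sum_i A_i\,[\nabla_h\cdot u_h]_i\,\varphi_i=-\tfrac12\left(\nabla_h\cdot u_h,\varphi_h\right)_{0,h}$, which is \eqref{eq:41}. The proof of \eqref{eq:42} is entirely parallel: write $\left(u_h,\tilde\nabla_h^\perp\psi_h\right)_{0,h}=\sum_e A_e\,u_e\,[\tilde\nabla_h^\perp\psi_h]_e$, use \eqref{eq:35} together with the companion identity $A_e/l_e=\tfrac12\,d_e$, swap the summation through $\nu\in\VE(e)\iff e\in\EV(\nu)$, and recognize $\sum_{e\in\EV(\nu)} d_e\,u_e\,t_{e,\nu}=-A_\nu\,[\tilde\nabla_h\times u_h]_\nu$ from \eqref{eq:28}; the minus sign carried by \eqref{eq:28} supplies the overall $-\tfrac12$.

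The one place that requires care --- and which I expect to be the only genuine obstacle --- is the behaviour near $\partial\Omega$. The divergence \eqref{eq:26} on a partial boundary cell is taken with the no-flux convention (only interior and partial edges contribute), and on a boundary edge the skew gradient is evaluated by \eqref{eq:40}, i.e.\ as though $\psi_h$ vanished there. One must check that, once the area and length conventions are extended consistently to the partial diamonds adjacent to the boundary, the boundary contributions that appear after the summation interchange are exactly those produced by these two conventions, so that no spurious boundary term survives on either side. Apart from this bookkeeping the argument is purely algebraic and uses only the definitions together with $A_e=\tfrac12 d_e l_e$.
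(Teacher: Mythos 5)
Your proposal is correct and follows essentially the same route as the paper's proof: expand the $V_h$ inner product over the diamonds, use $A_e=\tfrac12 d_e l_e$ (equivalently $A_e/d_e=\tfrac12 l_e$ and $A_e/l_e=\tfrac12 d_e$) to produce the factor $\tfrac12$, interchange the order of summation via $i\in\CE(e)\iff e\in\EC(i)$ and $\nu\in\VE(e)\iff e\in\EV(\nu)$, and recognize the definitions \eqref{eq:26} and \eqref{eq:28}. Your remark on the boundary bookkeeping is the same convention the paper relies on (the no-flux form of \eqref{eq:26} and the one-vertex form \eqref{eq:40}), so no further changes are needed.
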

\begin{proof}
  We verify equation \eqref{eq:41} first. For arbitrary $u_h\in V_h$,
  and $\varphi_h\in\Phi_h$, by the definitions of the inner products
  and the discrete operators, we have
  \begin{align*}
    \left(u_h,\,\nabla_h\varphi_h\right)_{0,h} &=
    \sum_{e=1}^{N_e + N_{eb}}\sum_{i\in \CE(e)}\dfrac{-A_e}{d_e} u_e \varphi_i
    n_{e,i}\\
    &=\sum_{e=1}^{N_e + N_{eb}}\sum_{i\in \CE(e)}-\dfrac{1}{2}l_e u_e \varphi_i n_{e,i}.
  \end{align*}
We now switch the order of summations,
  \begin{align*}
    \left(u_h,\,\nabla_h\varphi_h\right)_{0,h} &=
-\dfrac{1}{2}\sum_{i=1}^{N_c + N_{cb}}\varphi_i \sum_{e\in \EC(i)}u_e l_e n_{e,i}\\
&=-\dfrac{1}{2}\sum_{i=1}^{N_c + N_{cb}}A_i \varphi_i\left(\dfrac{1}{A_i}
  \sum_{e\in \EC(i)}u_e l_e n_{e,i}\right).
  \end{align*}
From the definition of the discrete divergence operator (\ref{eq:27})
it follows that 
\begin{equation*}
 \left( u_h,\,\nabla_h\varphi_h\right)_{0,h} =
    -\dfrac{1}{2}\left(\nabla_h\cdot u_h,\,\varphi_h\right)_{0,h}.
\end{equation*}

To show (\ref{eq:42}), we again invoke the definitions of the inner
product and the discrete operator, and by simple calculations, we find
that
\begin{align*}
  \left(u_h,\,\tilde\nabla^\perp\psi_h \right)_{0,h} &=
  \sum_{e=1}^{N_e + N_{eb}} A_e u_e \left(\dfrac{1}{l_e} \sum_{\nu\in \VE(e)}
      \psi_\nu t_{e,\nu}\right)\\
&= \sum_{e=1}^{N_e + N_{eb}} \dfrac{1}{2}u_e d_e \sum_{\nu\in \VE(e)}\psi_\nu
t_{e,\nu}\\
&= \dfrac{1}{2} \sum_{e=1}^{N_e + N_{eb}} u_e d_e \sum_{\nu\in VE(e)}\psi_\nu t_{e,\nu}.
\end{align*}
Now we switch the order of summations,
\begin{align*}
  \left(u_h,\,\tilde\nabla^\perp\psi_h \right)_{0,h} &=
\dfrac{1}{2}\sum_{\nu=1}^{N_\nu}\psi_\nu \sum_{e\in \EV(\nu)} u_e d_e 
t_{e,\nu}\\
&= -\dfrac{1}{2} \sum_{\nu=1}^{N_\nu} A_\nu
\psi_\nu\left(\dfrac{-1}{A_\nu} \sum_{e\in \EV(\nu)} u_e d_e  
t_{e,\nu}\right). 
\end{align*}
From the definition of the discrete curl operator (\ref{eq:29}) on the
dual mesh, it follows that 
\begin{equation*}
      \left( u_h,\,\tilde\nabla_h^\perp\psi_h\right)_{0,h} =
    -\dfrac{1}{2}\left(\tilde\nabla_h\times u_h,\,\psi_h\right)_{0,h}.
\end{equation*}
\end{proof}

It is clear that equations (\ref{eq:41}) and (\ref{eq:42}) are, respectively, the
discrete versions of the integration-by-parts formulae
\begin{align}
  \int_\Omega\ub\cdot\nabla\varphi dx =&
  -\int_\Omega\nabla\cdot\ub\varphi dx, & &\forall \ub\in 
  H^\div_0(\Omega),\,\varphi\in H^1(\Omega),\label{eq:58}\\
  \int_\Omega\ub\cdot\nabla^\perp\psi dx =&
  -\int_\Omega\nabla\times\ub\psi dx, & &\forall \ub\in 
  H^\curl(\Omega),\,\psi\in H^1_0(\Omega).\label{eq:59}
\end{align}
The factor of one half in the discrete version stems from the fact
that the discrete vector fields $u_h$, $\nabla_h\varphi_h$, and
$\tilde\nabla^\perp\psi_h$ contain only the normal component (in the
direction of $\nb_e$). The no-flux boundary condition, required for
\eqref{eq:58}, is implied in 
the specification of the discrete divergence operator, and the
homogeneous boundary condition on the scalar field $\psi$, required
for \eqref{eq:59}, is implied in the
specification of the discrete skewed gradient operator
$\tilde\nabla^\perp$ on $\psi_h$. 
See the comments following definition \eqref{eq:26} and the comments
following \eqref{eq:40}. It is worth pointing out that the identity
\eqref{eq:42} is still valid if $u_h$ vanishes on the boundary and
$\psi_h$ is arbitrary. Indeed, we will encounter this situation in the
next section in dealing with the incompressible Stokes problem.


In two-dimension, it is well known that a non-divergent vector field
is the curl of a scalar field, and an irrotational vector field is the
gradient of a scalar field, and the set of non-divergent vector
functions and the set of irrotational vector functions form an
orthogonal decomposition of the $L^2(\Omega)\times L^2(\Omega)$
function space (\cite[Section 3]{Girault:1986vn}). We now establish
the discrete version of these results for the space $V_h$. 

\begin{lemma}\label{lem:nondivergent}
Assume that the domain $\Omega$ is simply connected.  For $u_h \in
V_h$, 
\begin{equation}
  \label{eq:92}
\nabla_h\cdot u_h = 0
\end{equation}
if and only if there 
  exists $\psi_h \in \Psi_h$ such that 
  \begin{equation}
    \label{eq:43}
    u_h = \tilde\nabla_h^\perp\psi_h.
  \end{equation}
\end{lemma}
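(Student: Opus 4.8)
The plan is to prove the two implications separately, treating the easy one first because it is used in the proof of the hard one.

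\emph{From \eqref{eq:43} to \eqref{eq:92}.} This is the discrete analogue of $\div\curl=0$, that is, the identity $\nabla_h\cdot\tilde\nabla_h^\perp\psi_h=0$, and I would verify it by substituting the definition \eqref{eq:35} of $\tilde\nabla_h^\perp$ into the definition \eqref{eq:26} of $\nabla_h\cdot$. The length factors $l_e$ cancel, and after interchanging the sums over $e\in\EC(i)$ and $\nu\in\VE(e)$ one is left, on each primary cell $A_i$, with a sum over the vertices $A_\nu$ of $A_i$ of $\psi_\nu\,(n_{e,i}t_{e,\nu}+n_{e',i}t_{e',\nu})$, where $e,e'\in\EC(i)$ are the two edges of $A_i$ meeting at $A_\nu$. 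The parenthesised quantity vanishes: fixing, say, a counterclockwise traversal of $\partial A_i$, the relation $\tb_e=\kb\times\nb_e$ shows the direction of travel along $e$ is $n_{e,i}\tb_e$, so one of $e,e'$ is ``entered'' and the other ``left'' at $A_\nu$, and the two contributions cancel — exactly the telescoping behind $\oint\nabla\varphi=0$. At a partial boundary cell the same computation applies once $\psi_h$ is read as vanishing at the boundary vertices, the convention already implicit in \eqref{eq:40}.

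\emph{From \eqref{eq:92} to \eqref{eq:43}.} Rather than building a discrete stream function by path integration, I would count dimensions, which is why the Euler relation \eqref{eq:13} and the Poincar\'e inequality \eqref{eq:85} were recorded earlier. The operator $\tilde\nabla_h^\perp:\Psi_h\to V_h$ is injective, since $\tilde\nabla_h^\perp\psi_h=0$ gives $|\psi_h|_{1,h}=0$, hence $\psi_h=0$ by \eqref{eq:85}; therefore $\dim(\tilde\nabla_h^\perp\Psi_h)=\dim\Psi_h=N_v$. By the first implication $\tilde\nabla_h^\perp\Psi_h\subseteq\ker(\nabla_h\cdot)$, so it suffices to show $\dim\ker(\nabla_h\cdot)=N_v$, whereupon the inclusion must be an equality and the lemma follows. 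For this I would compute the rank of $\nabla_h\cdot:V_h\to\Phi_h$. On one hand the image lies in the zero-average subspace $\dot\Phi_h$: taking $\varphi_h\equiv1$ in Lemma \ref{lem:integ-by-parts} gives $(\nabla_h\cdot u_h,1)_{0,h}=-2(u_h,\nabla_h1)_{0,h}=0$, because a constant has vanishing discrete gradient by \eqref{eq:34}, as $\sum_{i\in\CE(e)}n_{e,i}=0$ on every edge. On the other hand, if $\varphi_h$ is orthogonal to the image then \eqref{eq:41} gives $(u_h,\nabla_h\varphi_h)_{0,h}=0$ for every $u_h\in V_h$; choosing $u_h=\nabla_h\varphi_h$ forces $\nabla_h\varphi_h=0$, hence $\varphi_h$ is constant because $\Omega$ is connected. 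Thus the image of $\nabla_h\cdot$ is exactly $\dot\Phi_h$, of dimension $N_c+N_{cb}-1$, and
\[
  \dim\ker(\nabla_h\cdot)=(N_e+N_{eb})-(N_c+N_{cb}-1)=N_v
\]
by \eqref{eq:13}. This equals $\dim(\tilde\nabla_h^\perp\Psi_h)$, so the two subspaces coincide.

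I expect the rank identity $\mathrm{rank}(\nabla_h\cdot)=N_c+N_{cb}-1$ to be the delicate point: both of its ingredients — that the discrete divergence has zero total integral over $\Omega$, so the image sits inside $\dot\Phi_h$, and that $\nabla_h\varphi_h=0$ forces $\varphi_h$ constant — have to be checked with the partial boundary cells in mind, while simple connectedness enters only through the Euler identity \eqref{eq:13}. As a consistency check one can instead construct $\psi_h$ directly: the numbers $u_e l_e$ define a discrete one-form on the primary-edge graph whose circulation around each cell $A_i$ is $A_i[\nabla_h\cdot u_h]_i=0$, so on a simply connected $\Omega$ the one-form is a discrete gradient, and integrating it from a boundary vertex (where $\psi$ is set to $0$, consistently, because the no-flux condition makes $\psi$ constant along $\partial\Omega$) produces a $\psi_h\in\Psi_h$ with $\tilde\nabla_h^\perp\psi_h=u_h$. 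Since this route additionally requires a discrete Poincar\'e-lemma argument for path-independence, I would keep the dimension count as the main proof.
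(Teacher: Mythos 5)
Your proof is correct, but the necessity direction takes a genuinely different route from the paper. The paper constructs the discrete stream function explicitly: it sets $\psi_\nu=0$ at one vertex, propagates values along edges via $u_e l_e=\sum_{\nu\in\VE(e)}\psi_\nu t_{e,\nu}$, and then uses \eqref{eq:92} together with the absence of holes in any region enclosed by two paths to show the result is path-independent — i.e.\ exactly the ``consistency check'' you relegate to the end. Your main argument instead is a rank count: injectivity of $\tilde\nabla_h^\perp$ on $\Psi_h$ (via the discrete Poincar\'e inequality \eqref{eq:85}) gives $\dim(\tilde\nabla_h^\perp\Psi_h)=N_v$, the adjointness relation \eqref{eq:41} gives $\mathrm{rank}(\nabla_h\cdot)=N_c+N_{cb}-1$, and Euler's formula \eqref{eq:13} closes the gap, forcing the inclusion $\tilde\nabla_h^\perp\Psi_h\subseteq\ker(\nabla_h\cdot)$ to be an equality. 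Both arguments are sound and both ultimately lean on simple connectedness — yours through \eqref{eq:13}, the paper's through the ``enclosed region has no holes'' step. Your version is shorter, yields the useful byproduct that $\nabla_h\cdot$ maps $V_h$ onto $\dot\Phi_h$, and is in fact close in spirit to the counting arguments the paper itself uses later (Lemmas \ref{lem-orthogonal-decomp} and \ref{lem:curl-div}); its cost is that it imports the Poincar\'e inequality, which the paper only cites from the literature, whereas the paper's constructive proof is self-contained and makes the topological hypothesis geometrically visible. The sufficiency direction is the same telescoping cancellation in both treatments, and your handling of the partial boundary cells (reading the single-vertex convention \eqref{eq:40} as a vanishing boundary value) matches the paper's conventions.
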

\begin{proof}
  We first show sufficiency. Let $u_h$ be given by a scalar field
  $\psi_h\in \Psi_h$ via
  \begin{equation*}
    u_h = \tilde\nabla^\perp_h \psi_h.
  \end{equation*}
Then for an arbitrary cell $i$, 
\begin{align*}
  [\nabla_h\cdot u_h]_i =& \dfrac{1}{A_i} \sum_{e\in \EC(i)} u_e l_e
  n_{e,i}\\
=&\dfrac{1}{A_i} \sum_{e\in\EC(i)}\left(\dfrac{1}{l_e}\sum_{\nu\in
    \VE(e)} \psi_\nu t_{e,\nu}\right) l_e n_{e,i}\\
=& \dfrac{1}{A_i} \sum_{e\in \EC(i)}\sum_{\nu\in \VE(e)} \psi_\nu
t_{e,\nu} n_{e,i}.
\end{align*}
It is easy to verify that, surrounding cell $i$, each $\psi_\nu$
appears exactly twice, with opposite signs. Hence the summation is
zero, and \eqref{eq:92} is proven. 

For necessity, let $u_h\in V_h$ be a discrete vector field such that
(\ref{eq:92}) holds. For an arbitrary vertex, say $\nu=1$, we set
$\psi_\nu =0$, or any other constants. For a vertex $\nu$ that is connected
to vertex $1$ by a common edge, $\psi_\nu$ can be obtained by
integrating $u_e$ on the common edge. Specifically, $\psi_\nu$ on
neighboring vertices can be obtained through the relation 
\begin{equation}
  \label{eq:61}
  u_e l_e = \sum_{\nu\in \VE(e)} \psi_\nu t_{e,\nu}.
\end{equation}
It is obvious that, on edges that originate from vertex $1$, $u_h$ is
given by the formula (\ref{eq:43}).

\begin{figure}[h]
  \centering
  \includegraphics[width=8cm]{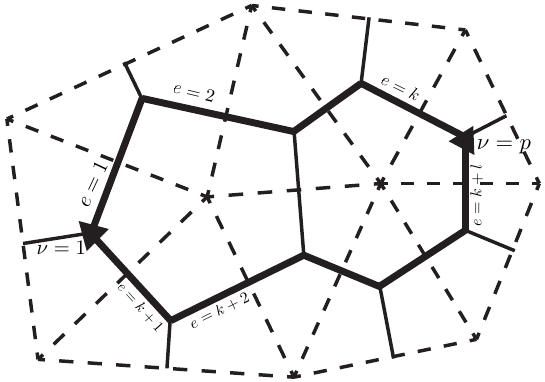}
  \caption{Closed path around primary cells}
  \label{fig:paths-primary}
\end{figure}

The integration can be further carried out to reach points that are
not directly connected to vertex $1$. By assumption, the domain
$\Omega$ is connected, and therefore every $\psi_\nu$ can be
determined this way. Since two arbitrarily given vertices can be
connected by multiple paths, we need to verify that results obtained
over different paths are consistent. To this end, we suppose that
vertex $p$ is connected to vertex 1 through two paths, the first
consisting of edges $1$, $2$, $\cdots$, $k$, and the second consisting
of edges $k+1$, $k+2$, $\cdots$, $k+l$. We also suppose that the
integration along the first path yields $\psi_p$, and the integration
along the second path yields $\tilde \psi_p$. We shall show that these
two results are identical. We note that the two paths form a closed
curve, and the surrounded region consists of primary cells, and no
holes (Figure 
\ref{fig:paths-primary}). By the assumption (\ref{eq:92}), the
net flux across the boundary of each individual cell is zero, and
therefore, it must 
also be zero across the boundary of the surrounded region, that is, 
\begin{equation}
  \label{eq:62}
  \sum_{e=1}^{k+l} u_e l_e n_e = 0,
\end{equation}
where $n_e$ is an indicator of the direction of the unit normal vector
$\nb_e$ with respect of the surrounded region, and it is defined as 
\begin{equation*}
  n_e = \left\{
    \begin{aligned}
      &1 & &\textrm{if $\nb_e$ points outward,}\\
      &-1 & &\textrm{if $\nb_e$ points inward.}
    \end{aligned}
\right.
\end{equation*}
Multiplying (\ref{eq:61}) by $n_e$  and summing over $1\leq e \leq k$,
we have
\begin{equation*}
  \sum_{e=1}^k u_e l_e n_e = \sum_{e=1}^k \sum_{e=1}^k \sum_{\nu\in
    \VE(e)} \psi_\nu t_{e,mu} n_e.
\end{equation*}
On the right-hand side, each $\psi_\nu$ except $\psi_1$ and $\psi_\p$
appears exactly twice, but with opposite signs. Hence
\begin{equation*}
  \sum_{e=1}^k u_e l_e n_e = -\psi_1 + \psi_p.
\end{equation*}
Similarly, along the second path, we have
\begin{equation*}
  \sum_{e=k+1}^{k+l}  u_e l_e n_e = \psi_1 - \tilde\psi_p.
\end{equation*}
It follows from (\ref{eq:62}) that 
\begin{equation*}
  \psi_p = \tilde\psi_p.
\end{equation*}
\end{proof}

\begin{lemma}\label{lem:irrotational}
Assume that the domain $\Omega$ is simply connected.  For $u_h \in
V_h$,  
  \begin{equation}
    \label{eq:63}
\tilde\nabla_h\times u_h = 0
  \end{equation}
 if and only if there
  exists $\varphi_h \in \Phi_h$ such that 
  \begin{equation}
\label{eq:44}
    u_h = \nabla_h\varphi_h.
  \end{equation}
\end{lemma}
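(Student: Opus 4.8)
The plan is to follow the proof of Lemma~\ref{lem:nondivergent}, interchanging the roles of the primary and dual meshes: the discrete divergence $\nabla_h\cdot$ is replaced by the discrete curl $\tilde\nabla_h\times$, the edge length $l_e$ by $d_e$, and the orientation indicator $n_{e,i}$ by $t_{e,\nu}$.

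For sufficiency, suppose $u_h=\nabla_h\varphi_h$ for some $\varphi_h\in\Phi_h$. Substituting the defining formula~\eqref{eq:34}, $u_e=-d_e^{-1}\sum_{i\in\CE(e)}\varphi_i n_{e,i}$, into the definition~\eqref{eq:28} of $[\tilde\nabla_h\times u_h]_\nu$, the factors $d_e$ cancel and one is left with
\begin{equation*}
  [\tilde\nabla_h\times u_h]_\nu=\frac{1}{A_\nu}\sum_{e\in\EV(\nu)}\sum_{i\in\CE(e)}\varphi_i\,n_{e,i}\,t_{e,\nu}.
\end{equation*}
The geometric fact to exploit is that the vertices of the dual cell $D_\nu$ are precisely centers of primary cells, so each primary cell $i$ entering this double sum is incident to exactly two edges $e\in\EV(\nu)$ --- the two dual-cell edges meeting at that vertex --- and hence $\varphi_i$ occurs exactly twice. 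I would then check, from the orientation convention~\eqref{eq:14} together with~\eqref{eq:15}--\eqref{eq:115}, that the two coefficients $n_{e,i}t_{e,\nu}$ are opposite, so that the contributions cancel and~\eqref{eq:63} follows.

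For necessity, assume~\eqref{eq:63}. I would construct $\varphi_h$ by integrating $u_h$ along the dual edges, exactly as $\psi_h$ was built from $u_h$ in Lemma~\ref{lem:nondivergent}: fix a primary cell, say $i=1$, set $\varphi_1=0$, and whenever two primary cells $i'$ and $i$ share an edge $e$, recover $\varphi_i$ from $\varphi_{i'}$ through the relation $u_ed_e=-\sum_{j\in\CE(e)}\varphi_j n_{e,j}$. Because the primary cells form a connected graph, this propagation reaches every primary cell. To show the value assigned to a cell is path independent, take two paths from cell $1$ to cell $p$; together they bound a region which, by the simple connectedness of $\Omega$, is a union of dual cells with no holes. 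Summing~\eqref{eq:63} over those dual cells, the interior edge terms cancel in pairs and one is left with the statement that the net circulation of $u_h$ (the sum of the $u_ed_e$ with signs) around the boundary of the region vanishes; summing the relations $u_ed_e=-\sum_j\varphi_jn_{e,j}$ along each of the two paths telescopes to the difference of the endpoint values of $\varphi$, so the two candidate values of $\varphi_p$ agree. Thus $\varphi_h\in\Phi_h$ is well defined, and by construction $u_e=[\nabla_h\varphi_h]_e$ on every edge, i.e.~\eqref{eq:44}.

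I expect the main obstacle to be the orientation bookkeeping in both halves: that the two occurrences of each $\varphi_i$ around a dual cell carry opposite signs, and that the signed circulation around the enclosed region (written through the $t_{e,\nu}$) matches, up to an overall sign, the telescoped difference of $\varphi$-values along the two paths. As in Lemma~\ref{lem:nondivergent} this is routine but delicate; a minor simplification compared with that lemma is that every dual cell lies in the interior of $\Omega$, so the region enclosed by the two paths produces no boundary subtleties.
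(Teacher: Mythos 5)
Your proposal is correct and follows essentially the same route as the paper's proof: sufficiency by substituting the discrete gradient into the discrete curl and cancelling the two signed occurrences of each $\varphi_i$ around a dual cell, and necessity by integrating $u_h$ along dual edges from a fixed primary cell, with path independence obtained from the vanishing circulation around the region (a union of dual cells) enclosed by two paths. No gaps; the orientation bookkeeping you flag is exactly the step the paper also treats as routine.
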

\begin{proof}
  We first verify the sufficiency. We let $\varphi_h\in \Phi_h$ such
  that
  \begin{equation*}
    u_h = \nabla_h \varphi_h.
  \end{equation*}
Then for an arbitrary vertex $\nu$,
\begin{align*}
  [\tilde\nabla\times u_h]_\nu &= \dfrac{-1}{A_\nu}\sum_{e\in\EV(\nu)} u_e
  d_e t_{e,\nu}\\
&=\dfrac{-1}{A_\nu}\sum_{e\in\EV(\nu)}\dfrac{-1}{d_e}\left(\sum_{i\in\CE(e)}
  \varphi_i n_{e,i}\right) d_e t_{e,\nu}\\
&=\dfrac{1}{A_\nu} \sum_{e\in EV(\nu)} \sum_{i\in\CE(e)} \varphi_i
n_{e,i} t_{e,\nu}.
\end{align*}
We note that in the expression above concerning an arbitrary vertex
$\nu$, each $\varphi_i$ appears exactly twice, but with opposite
signs. Hence the summation vanishes for each $\nu$.

For necessity, we assume that $u_h\in V_h$, and (\ref{eq:63})
holds. We pick an arbitrary cell center, say cell $1$, and set 
\begin{equation*}
  \varphi_1 = 0.
\end{equation*}
Then we determine the values of the $\varphi$ at neighboring cell
centers by integrating $u_e$ along the dual cell edges. Specifically,
$\varphi_i$ at a neighboring cell center is obtained via
\begin{equation}
  \label{eq:64}
  u_e d_e = -\sum_{i\in\CE(e)}\varphi_i n_{e,i}.
\end{equation}
It is clear that the relation (\ref{eq:44}) holds along dual cell
edges originating from cell $1$.

\begin{figure}[h]
  \centering
 \includegraphics[width=8cm]{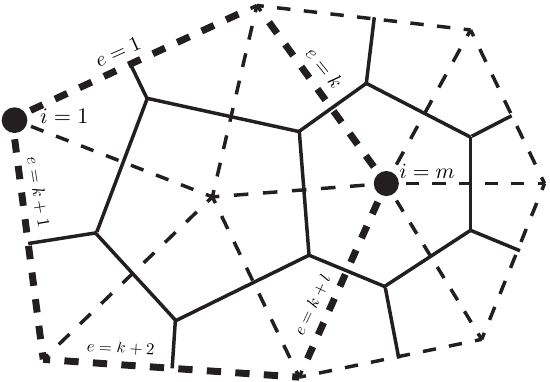}
  \caption{Two paths around dual cells}
  \label{fig:paths-dual-cells}
\end{figure}
The integration is then carried out to define $\varphi_i$'s at cell
centers not directly connected to cell $1$. The domain is connected,
and therefore each $\varphi_i$ can be defined this way. To ensure that
a discrete scalar field $\varphi_h$ is well-defined, we just need to
show that, for an arbitrary cell $m$, integrations along any two paths
yield the same value. Without loss of generality, we assume that the
first path consists of dual cell edges $1$, $2$, $\cdots$, $k$
(Figure \ref{fig:paths-dual-cells}), and
the integration yields $\varphi_m$, and the second path consists of
dual cell edges $k+1$, $k+2$, $\cdots$, $k+l$, and the integration
yields $\tilde\varphi_m$. These two paths form a closed curve, and the
enclosed region is made up of dual cells, and no holes. By the assumption
(\ref{eq:63}), the circulation around each dual cell is zero, and
therefore the circulation around the entire enclosed region is also
zero, that is,
\begin{equation}
  \label{eq:65}
  \sum_{e=1}^{k+l} u_e d_e t_e = 0,
\end{equation}
where $t_e$ is an indicator of the direction of the unit tangent
vector $\tb_e$ with respect to the enclosed region,
\begin{equation*}
  t_e = \left\{
    \begin{aligned}
      &1 & &\textrm{if $\tb_e$ points outward,}\\
      &-1 & &\textrm{if $\tb_e$ points inward.}
    \end{aligned}
\right.
\end{equation*}
Multiplying (\ref{eq:64}) by $t_e$ and summing over $1\le e \le k$, we
obtain
\begin{equation*}
  \sum_{e=1}^k u_e d_e t_e = -\sum_{e=1}^k\sum_{i\in\CE(e)} \varphi_i
  n_{e,i} t_e.
\end{equation*}
On the right-hand side, each $\varphi_i$ except $\varphi_1$ and
$\varphi_m$ appears exactly twice but with opposite signs, and hence
\begin{equation*}
  \sum_{e=1}^k u_e d_e t_e = -\varphi_1 + \varphi_m.
\end{equation*}
Similarly, multiplying (\ref{eq:64}) by $t_e$ and summing over $k+1\le
e \le k+l$, we find
\begin{equation*}
  \sum_{e=1}^k u_e d_e t_e = \varphi_1 - \tilde\varphi_m.
\end{equation*}
In view of (\ref{eq:65}), we conclude that 
\begin{equation*}
  \varphi_m = \tilde\varphi_m.
\end{equation*}
\end{proof}

\begin{lemma}\label{lem-orthogonal-decomp}
The space of discrete vector fields has the following orthogonal
decomposition
\begin{equation}
  \label{eq:45}
  V_h = \{ u_h\in V_h |\, \nabla_h\cdot u_h = 0\}\oplus\{u_h\in V_h
  |\, \tilde\nabla\times u_h = 0\}.
\end{equation}
\end{lemma}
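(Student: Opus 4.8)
The plan is to recognize the two subspaces on the right-hand side of \eqref{eq:45} as mutually orthogonal complements inside the finite-dimensional Hilbert space $\bigl(V_h,(\cdot,\cdot)_{0,h}\bigr)$, so that \eqref{eq:45} becomes the standard splitting $V_h = N\oplus N^{\perp}$. Here I write $N=\{u_h\in V_h:\nabla_h\cdot u_h=0\}$ and $M=\{u_h\in V_h:\tilde\nabla_h\times u_h=0\}$; both are linear subspaces (automatically closed, since $V_h$ is finite-dimensional), and the whole claim reduces to the single identity $M=N^{\perp}$.

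For the inclusion $M\subseteq N^{\perp}$, I would take $v_h\in M$ and an arbitrary $u_h\in N$; by Lemma~\ref{lem:nondivergent} the divergence-free field $u_h$ can be represented as $u_h=\tilde\nabla_h^{\perp}\psi_h$ for some $\psi_h\in\Psi_h$, and then the integration-by-parts identity \eqref{eq:42} gives
\[
  (u_h,v_h)_{0,h}=(v_h,\tilde\nabla_h^{\perp}\psi_h)_{0,h}=-\tfrac12(\tilde\nabla_h\times v_h,\psi_h)_{0,h}=0 ,
\]
because $\tilde\nabla_h\times v_h=0$. (Symmetrically, one could instead write $v_h=\nabla_h\varphi_h$ via Lemma~\ref{lem:irrotational} and use \eqref{eq:41}; this makes the interchangeable roles of divergence and curl explicit.) For the reverse inclusion $N^{\perp}\subseteq M$, I would use Lemma~\ref{lem:nondivergent} in its surjective form, namely that $N$ equals the image $\tilde\nabla_h^{\perp}(\Psi_h)$. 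Thus $u_h\in N^{\perp}$ means $(u_h,\tilde\nabla_h^{\perp}\psi_h)_{0,h}=0$ for every $\psi_h\in\Psi_h$, and \eqref{eq:42} turns this into $(\tilde\nabla_h\times u_h,\psi_h)_{0,h}=0$ for all $\psi_h\in\Psi_h$; choosing $\psi_h=\tilde\nabla_h\times u_h\in\Psi_h$ forces $|\tilde\nabla_h\times u_h|_{0,h}^2=0$, i.e.\ $u_h\in M$.

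Putting the two inclusions together gives $M=N^{\perp}$, hence $V_h=N\oplus N^{\perp}=N\oplus M$ with the sum orthogonal (and in particular $N\cap M=\{0\}$, which is also immediate from orthogonality: any $u_h\in N\cap M$ satisfies $|u_h|_{0,h}^2=(u_h,u_h)_{0,h}=0$). All the computations above are one-liners once Lemmas~\ref{lem:nondivergent}--\ref{lem:irrotational} and the integration-by-parts formulae are available, so I do not expect a real obstacle. The one place to be careful is invoking Lemma~\ref{lem:nondivergent} in the form ``$N=\tilde\nabla_h^{\perp}(\Psi_h)$'' so that testing against \emph{all} of $\Psi_h$ is legitimate; this is exactly the discrete analogue of the $L^2$ Helmholtz decomposition cited from \cite{Girault:1986vn}, and the trick of substituting $\psi_h=\tilde\nabla_h\times u_h$ back into \eqref{eq:42} is what does the real work.
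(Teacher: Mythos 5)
Your proof is correct, and it diverges from the paper's in the half that does the real work. The orthogonality inclusion $M\subseteq N^{\perp}$ is exactly the computation the paper performs (represent the divergence-free field via Lemma~\ref{lem:nondivergent} and apply \eqref{eq:42}), but the paper establishes that the two subspaces actually span $V_h$ by a constructive argument: it writes $u_h=\tilde\nabla_h^{\perp}\psi_h+\nabla_h\varphi_h$ as a linear system \eqref{eq:67} in the potential unknowns, uses Euler's formula \eqref{eq:13} to see that the mean-zero constraint \eqref{eq:68} makes the system square, and then shows the homogeneous system has only trivial solutions by applying the discrete curl and divergence in turn and integrating by parts. You instead invoke the abstract decomposition $V_h=N\oplus N^{\perp}$ of a finite-dimensional inner-product space and reduce everything to the single identity $M=N^{\perp}$; the nontrivial inclusion $N^{\perp}\subseteq M$ follows from the surjective reading of Lemma~\ref{lem:nondivergent} ($N$ is precisely the image of $\tilde\nabla_h^{\perp}$ on $\Psi_h$), the identity \eqref{eq:42}, and the choice $\psi_h=\tilde\nabla_h\times u_h$. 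Your route is shorter, avoids the dimension count entirely, and does not even need Lemma~\ref{lem:irrotational}. What the paper's constructive proof buys in exchange is the unique solvability of the potential system \eqref{eq:67}--\eqref{eq:68}, which is what is actually cited later in the proof of the discrete Poincar\'e inequality (Lemma~\ref{lem:disc-poincare-vector}), where unique potentials $\psi_h\in\Psi_h$ and $\varphi_h\in\dot{\Phi}_h$ are required; with your argument one would recover that by appending the short injectivity step ($\tilde\nabla_h^{\perp}\psi_h=0\Rightarrow\psi_h=0$ via the boundary convention \eqref{eq:40}, and $\nabla_h\varphi_h=0$ plus zero mean $\Rightarrow\varphi_h=0$) that the paper runs inside its homogeneous-system analysis.
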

\begin{proof}
We first show that the two sets are orthogonal. We let
$u_h,\,u^\#_h\in V_h$ such that $\nabla\cdot u_h = 0$ and
$\tilde\nabla\times u^\#_h=0$. Then by Lemma \ref{lem:nondivergent}, there
exists $\psi_h\in\Psi_h$ such that $u_h
=\tilde\nabla^\perp\psi_h$. Using the integration by parts formula
\eqref{eq:42}, we find that
\begin{equation*}
  (u_h,\,u^\#_h) = (\tilde\nabla^\perp \psi_h,\, u^\#_h) =
  -\dfrac{1}{2}(\psi_h,\,\tilde\nabla\times u^\#_h) = 0.
\end{equation*}
Thus $u_h$ and $u^\#_h$ are orthogonal.

We now show that each element of $V_h$ is the sum of an non-divergent
discrete vector field and an irrotational discrete vector field. In
view of Lemmas \ref{lem:nondivergent} 
and \ref{lem:irrotational}, this amounts to saying that there exist
$\varphi_h\in \Phi_h$ and $\psi_h\in \Psi_h$ such that
\begin{equation}
  \label{eq:66}
  u_h = \tilde\nabla^\perp\psi_h + \nabla_h\varphi_h.
\end{equation}
This single equation actually represents a system of equations
involving the normal velocity components $u_e$, $1\le e \le N_e + N_{eb}$, on
the edges, and the discrete scalar variable $\varphi_i$, $1\le i\le
N_c + N_{cb}$, at cell centers, and $\psi_\nu$, $1\le\nu\le N_v$, at cell
vertices. There are $N_c + N_{cb}+N_v$ unknowns. The system reads
\begin{equation}
  \label{eq:67}
  [\tilde\nabla^\perp\psi_h]_e + [\nabla_h \varphi_h]_e = u_e,\qquad
  1\le e\le N_e + N_{eb}.
\end{equation}
Hence there are $N_e + N_{eb}$ equations. By Euler's formula \eqref{eq:13}
there is one more unknown than the number of equations. This reflects
the fact that any $\varphi_h$ that satisfies \eqref{eq:66} will still
satisfy the equation after an addition of any constant. To make the
solution unique, we can impose an extra constraint requiring that the
area weighted average of $\varphi_h$ be zero, that is, 
\begin{equation}
  \label{eq:68}
  \int_\Omega \varphi_h dx = 0,\quad\textrm{or equivalently, }\quad\sum_{i=1}^{N_c + N_{cb}}
  A_i\varphi_i = 0.
\end{equation}
Equations \eqref{eq:67} and \eqref{eq:68} form a square linear
system. To show that this system has a unique solution for an
arbitrary $u_h\in V_h$, we only need to show that the homogeneous
system 
\begin{equation}
  \label{eq:69}
  \left\{
    \begin{aligned}
      &[\tilde\nabla^\perp\psi_h]_e + [\nabla_h \varphi_h]_e =0,\qquad
      1\le e\le N_e + N_{eb},\\
      &\sum_{i=1}^{N_c + N_{cb}}  A_i\varphi_i = 0,
    \end{aligned}\right.
\end{equation}
or equivalently,
\begin{equation}
  \label{eq:70}
  \left\{
    \begin{aligned}
      & \tilde\nabla^\perp\psi_h + \nabla_h \varphi_h = 0,\\
      &\int_\Omega\varphi_h dx = 0,
    \end{aligned}
\right.
\end{equation}
has only trivial solutions. It is clear that $\varphi_h=0$ and $\psi_h
= 0$ are solutions to the system \eqref{eq:70}. We let $\varphi_h$ and
$\psi_h$ be arbitrary discrete scalar functions that also satisfy the
system. Applying the discrete curl operator $\tilde\nabla_h\times(\,)$
to the first equation of \eqref{eq:70}, we obtain
\begin{equation}
  \label{eq:71}
  \tilde\nabla_h\times\tilde\nabla^\perp \psi_h = 0.
\end{equation}
Multiplying \eqref{eq:71} by $\psi_h$, and integrating by parts using
\eqref{eq:42}, we find that
\begin{equation}
  \label{eq:72}
  \left(\tilde\nabla^\perp\psi_h,\,\tilde\nabla^\perp\psi_h \right) = 0.
\end{equation}
Hence
\begin{equation}
  \label{eq:73}
  \tilde\nabla^\perp\psi_h = 0,\quad\textrm{or equivalently,
  }[\tilde\nabla^\perp \psi_h]_e = 0,\,\forall 1\le e\le N_e + N_{eb}.
\end{equation}
Noticing the definition \eqref{eq:40} of the skewed discrete gradient operator
$\tilde\nabla^\perp$ on the boundary, we conclude that 
\begin{equation}
  \label{eq:74}
  \psi_h = 0 \qquad\textrm{(i.e.~{}}\psi_\nu=0\quad\forall
  1\le \nu\le N_v).
\end{equation}
Applying the discrete divergence operator $\nabla\cdot(\,)$ to the
first equation of \eqref{eq:70} again, we obtain
\begin{equation}
  \label{eq:75}
  \nabla_h\cdot(\nabla_h\varphi_h) = 0.
\end{equation}
Multiplying \eqref{eq:75} by $\varphi_h$ and integrating by parts
using \eqref{eq:41}, we find that
\begin{equation}
  \label{eq:76}
  \left(\nabla_h\varphi_h,\,\nabla_h\varphi_h\right) = 0.
\end{equation}
Hence 
\begin{equation}
  \label{eq:77}
  \nabla_h\varphi_h = 0.
\end{equation}
Under the constraint $\eqref{eq:70}_2$, $\varphi_h$ must vanish
everywhere, that is,
\begin{equation}
\label{eq:78}
  \varphi_h = 0 \qquad\textrm{(i.e.~{}}\varphi_i=0\quad\forall
  1\le i\le N_c + N_{cb}).
\end{equation}
\end{proof}

A discrete Poincar\'e inequality concerning the $L^2$-norm
\eqref{eq:30} and the semi-$H^1$ norm \eqref{eq:31} of $V_h$ can
be established, 
which allows us to use the semi-$H^1$ norm as the norm for $V_h$.
\begin{lemma}[Discrete Poincar\'e inequality for vector
  fields]\label{lem:disc-poincare-vector} 
  For $u_h\in V_h$,
  \begin{equation}
    \label{eq:86}
    |u_h|_{0,h} \le C |u_h|_{1,h}.
  \end{equation}
\end{lemma}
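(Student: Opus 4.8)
The plan is to combine the discrete Helmholtz decomposition of Lemma~\ref{lem-orthogonal-decomp}, the integration-by-parts formulae of Lemma~\ref{lem:integ-by-parts}, and the scalar Poincar\'e inequalities of Lemma~\ref{lem:disc-poincare-scalar}. Given $u_h\in V_h$, I would first invoke Lemma~\ref{lem-orthogonal-decomp} --- more precisely the construction \eqref{eq:66}--\eqref{eq:68} in its proof --- to write $u_h = \tilde\nabla_h^\perp\psi_h + \nabla_h\varphi_h$ with $\psi_h\in\Psi_h$ and $\varphi_h\in\dot{\Phi}_h$, where the latter is normalized by $\int_\Omega\varphi_h\dx = 0$. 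By the orthogonality asserted in \eqref{eq:45} one has $|u_h|_{0,h}^2 = |\tilde\nabla_h^\perp\psi_h|_{0,h}^2 + |\nabla_h\varphi_h|_{0,h}^2$, and moreover $(u_h,\tilde\nabla_h^\perp\psi_h)_{0,h} = |\tilde\nabla_h^\perp\psi_h|_{0,h}^2$ and $(u_h,\nabla_h\varphi_h)_{0,h} = |\nabla_h\varphi_h|_{0,h}^2$. It therefore suffices to bound each of the two terms on the right by the corresponding half of $|u_h|_{1,h}$.

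For the rotational part, I would apply the integration-by-parts identity \eqref{eq:42} to get $|\tilde\nabla_h^\perp\psi_h|_{0,h}^2 = (u_h,\tilde\nabla_h^\perp\psi_h)_{0,h} = -\tfrac{1}{2}(\tilde\nabla_h\times u_h,\psi_h)_{0,h}$, and then use Cauchy--Schwarz together with the scalar Poincar\'e inequality \eqref{eq:85} (valid on all of $\Psi_h$, since the homogeneous boundary condition is built into $\tilde\nabla_h^\perp$, cf.\ \eqref{eq:40}) to obtain
\[
|\tilde\nabla_h^\perp\psi_h|_{0,h}^2 \le \tfrac{1}{2}\,|\tilde\nabla_h\times u_h|_{0,h}\,|\psi_h|_{0,h} \le \tfrac{C}{2}\,|\tilde\nabla_h\times u_h|_{0,h}\,|\tilde\nabla_h^\perp\psi_h|_{0,h},
\]
whence $|\tilde\nabla_h^\perp\psi_h|_{0,h} \le \tfrac{C}{2}\,|\tilde\nabla_h\times u_h|_{0,h}$. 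For the irrotational part I would argue in exactly the same way, with \eqref{eq:41} in place of \eqref{eq:42}: since $\varphi_h\in\dot{\Phi}_h$, the scalar Poincar\'e inequality \eqref{eq:84} applies and gives $|\nabla_h\varphi_h|_{0,h} \le \tfrac{C}{2}\,|\nabla_h\cdot u_h|_{0,h}$. Squaring and adding the two bounds, and recalling $|u_h|_{1,h}^2 = |\nabla_h\cdot u_h|_{0,h}^2 + |\tilde\nabla_h\times u_h|_{0,h}^2$ from \eqref{eq:31}, yields $|u_h|_{0,h}^2 \le \tfrac{C^2}{4}\,|u_h|_{1,h}^2$, which is the claim.

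The only point needing a little care --- and the one I would flag explicitly --- is the gauge choice in the Helmholtz decomposition: one must take the version of Lemma~\ref{lem-orthogonal-decomp} in which $\varphi_h$ carries the zero-average normalization \eqref{eq:68}, so that $\varphi_h\in\dot{\Phi}_h$ and the scalar Poincar\'e inequality \eqref{eq:84} is legitimately invoked; no analogous normalization is required for $\psi_h$, because \eqref{eq:85} holds on all of $\Psi_h$. Beyond this bookkeeping the argument is a routine chain of Cauchy--Schwarz estimates, and the constant $C$ is inherited, up to the factor $1/2$, from the two scalar Poincar\'e constants; I do not anticipate any genuine obstacle.
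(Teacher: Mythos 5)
Your argument is correct and is essentially the paper's own proof: both rest on the discrete Helmholtz decomposition of Lemma~\ref{lem-orthogonal-decomp} with the zero-average normalization on $\varphi_h$, the integration-by-parts formulae \eqref{eq:41}--\eqref{eq:42}, and the scalar Poincar\'e inequalities \eqref{eq:84}--\eqref{eq:85}, combined via the orthogonality of the two components. The only difference is cosmetic --- you spell out the Cauchy--Schwarz/energy estimates that the paper states as the ``discrete analogues to the classical energy bounds'' \eqref{eq:90}--\eqref{eq:91} --- so there is nothing further to add.
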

\begin{proof}
  By Lemma \ref{lem-orthogonal-decomp}, there exist unique $\psi_h \in
  \Psi_h$ $\varphi_h\in\dot{\Phi}_h$ such that 
  \begin{equation}
    \label{eq:87}
    u_h = \tilde\nabla_h^\perp \psi_h + \nabla_h \varphi_h.
  \end{equation}
It is easy to see that $\psi_h$ and $\varphi_h$ satisfy the discrete
elliptic equations
\begin{align}
  \tilde\nabla_h\times\tilde\nabla_h^\perp \psi_h &=
  \tilde\nabla_h\times u_h,\label{eq:88}\\
  \nabla_h\cdot\nabla_h\varphi_h &= \nabla_h\cdot u_h.\label{eq:89} 
\end{align}
With the aid of the integration by parts formulae \eqref{eq:41} and
\eqref{eq:42}, and the discrete Poincar\'e inequalities \eqref{eq:84}
and \eqref{eq:85} for scalar fields, we derive the discrete analogues
to the classical energy bounds for elliptic problems,
\begin{align}
 & |\varphi_h|_{0,h} \le C |\varphi_h|_{1,h} \leq C|\nabla_h\cdot
  u_h|_{0,h},\label{eq:90}\\
&|\psi_h|_{0,h} \le C|\psi_h|_{1,h} \leq C|\tilde\nabla_h\times
u_h|_{0,h}.\label{eq:91} 
\end{align}
Here, $C$ stands for some generic constants that depend neither on the
function nor the mesh resolution $h$. Then, by the orthogonal decomposition
\eqref{eq:87}, and the estimates just obtained on $\varphi_h$ and $\psi_h$,
\begin{align*}
  |u_h|_{0,h}^2 &= |\tilde\nabla^\perp_h\psi_h|_{0,h}^2 +
  |\nabla_h\varphi_h|_{0,h}^2 \\
&= |\psi_h|_{1,h}^2 + |\varphi_h|_{1,h}^2\\
&\le C\left( |\tilde\nabla_h\times u_h|_{0,h}^2 + |\nabla_h\cdot
  u_h|_{0,h}^2\right)\\
&= C|u_h|_{1,h}^2.
\end{align*}
The claim is thus proven.
\end{proof}

\subsection{External approximation of $V$}\label{sec:extern-appr-v}
We recall that
\begin{equation*}
  V = H^1(\Omega)\cap H^\div_0(\Omega).
\end{equation*}
We let $F=H\equiv L^2(\Omega)\times L^2(\Omega)$, and for each $\ub\in
V$, we define
\begin{equation}
  \label{eq:46}
  \Pi \ub = (\nabla\times\ub,\,\nabla\cdot\ub)\quad \in F.
\end{equation}
It is clear that, thanks to the equality \eqref{eq:12},
\begin{equation}
  \label{eq:47}
  \Pi:\, V\xrightarrow{\hspace{1.5cm}} F
\end{equation}
is an isomorphism. For each $u_h\in V_h$, we define
\begin{equation}
  \label{eq:48}
  \Ph u_h = (\tilde\nabla_h\times u_h,\, \nabla_h\cdot u_h) \quad \in
  \Psi_h\times\Phi_h \subset F.
\end{equation}
Clearly $\Ph$ is a bounded linear operator from $V_h$ into $F$. We now
define the restriction operator $\Rh$. We only need to define $\Rh$ on
a dense subspace of $V$ (\cite{Temam:1980wr}). We let
\begin{equation}
  \label{eq:49}
  \mathcal{V} =\{ \ub\in C^\infty(\overline\Omega), \ub\cdot\nb = 0\textrm{
    on } \partial\Omega\}.
\end{equation}
Clearly $\mathcal{V}$ is a dense subspace of $V$. For each
$\ub\in\mathcal{V}$, we let $(\omega,\,\delta) = \Pi\ub$. We then
define their associated discrete scalar fields by
\begin{align}
  &\omega_h = \sum_{\nu=1}^{N_v}
  \omega_\nu\chi_\nu\quad\in\Psi_h,\label{eq:50}\\
  &\delta_h = \sum_{i=1}^{N_c + N_{cb}}
  \delta_i\chi_\nu\quad\in\Psi_h.\label{eq:51}
\end{align}
In the above, the discrete variables $\delta_i$ is set to be the
average of $\delta$ on primary cell $A_i$, i.e.
\begin{equation}
  \label{eq:52}
  \delta_i = 
  \dfrac{1}{|A_i|}\int_{A_i} \delta dx,
\end{equation}
so that, by the divergence theorem,
\begin{equation}
  \label{eq:57}
  \int_\Omega \delta_h dx = \int_\Omega \delta dx =
  \int_{\partial\Omega}\ub\cdot\nb ds = 0. 
\end{equation}
The discrete variables $\omega_\nu$ can be specified in various ways,
depending on the problem. For example, $\omega_\nu$ can be defined in
the same way that
$\delta_i$ is defined,
or it can simply be the value of $\omega$ at the center of
the dual cell $A_\nu$,
\begin{equation}
\label{eq:54}
  \omega_\nu = \omega(x_\nu),
\end{equation}
with $x_\nu$ being the coordinates of the dual cell center.
Then we let $u_h\in V_h$ be the discrete vector field from  that
satisfy 
\begin{equation}
  \label{eq:55}
  \left\{
    \begin{aligned}
      & \tilde\nabla_h\times u_h = \omega_h,\\
      & \nabla_h\cdot u_h = \delta_h.
    \end{aligned}\right.
\end{equation}
Assuming that the system \eqref{eq:55} is well-posed, i.e.~it has a
unique solution, we define such $u_h$ to be the image of $\Rh$ on
$\ub$, 
\begin{equation}
  \label{eq:56}
  \Rh\ub = u_h.
\end{equation}

We now show the well-posedness of the problem \eqref{eq:55}.
\begin{lemma}\label{lem:curl-div}
  For any $(\omega_h,\,\delta_h)\in \Psi_h\times\Phi_h$ satisfying
  $\int_\Omega \delta_h d x = 0$, the problem \eqref{eq:55} has a
  unique solution $u_h\in V_h$.
\end{lemma}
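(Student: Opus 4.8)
The plan is to read \eqref{eq:55} as the single linear equation $Tu_h=(\omega_h,\delta_h)$, where $T\colon V_h\to\Psi_h\times\Phi_h$ is the operator $Tu_h=(\tilde\nabla_h\times u_h,\,\nabla_h\cdot u_h)$, which is linear by \eqref{eq:25}--\eqref{eq:33}. Since $V_h$, $\Psi_h$ and $\Phi_h$ are all finite dimensional, the lemma amounts to showing that $T$ is an isomorphism of $V_h$ onto the codimension-one subspace $\Psi_h\times\dot\Phi_h$, where $\dot\Phi_h$ denotes the zero-average subspace of $\Phi_h$. I would establish this in three short steps: locating the range of $T$, matching dimensions, and verifying injectivity.

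First, the range: for every $u_h\in V_h$ one has $\int_\Omega\nabla_h\cdot u_h\,dx=0$, so that $Tu_h\in\Psi_h\times\dot\Phi_h$ and the hypothesis $\int_\Omega\delta_h\,dx=0$ is precisely the condition for $(\omega_h,\delta_h)$ to lie in the range. To see this, take the constant field $\varphi_h\equiv 1\in\Phi_h$; by \eqref{eq:34} its discrete gradient vanishes on every edge (the two values $n_{e,i}$ for $i\in\CE(e)$ are opposite), so the integration-by-parts identity \eqref{eq:41} gives $(\nabla_h\cdot u_h,1)_{0,h}=-2(u_h,\nabla_h1)_{0,h}=0$, and $(\nabla_h\cdot u_h,1)_{0,h}=\sum_i A_i[\nabla_h\cdot u_h]_i=\int_\Omega\nabla_h\cdot u_h\,dx$. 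This is just the discrete no-flux property already noted after \eqref{eq:26}.

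Second, the dimensions match: $\dim V_h=N_e+N_{eb}$, while $\dim(\Psi_h\times\dot\Phi_h)=N_v+(N_c+N_{cb}-1)$, and these agree by Euler's formula \eqref{eq:13}. Third, $T$ is injective: if $Tu_h=0$, then $u_h$ is both divergence-free and curl-free, hence lies in both summands of the orthogonal decomposition of Lemma~\ref{lem-orthogonal-decomp}; pairing $u_h$ with itself and using the mutual orthogonality of those subspaces gives $|u_h|_{0,h}=0$, so $u_h=0$. An injective linear map between finite-dimensional spaces of equal dimension is bijective, so $T$ maps $V_h$ isomorphically onto $\Psi_h\times\dot\Phi_h$; in particular \eqref{eq:55} has exactly one solution $u_h=T^{-1}(\omega_h,\delta_h)$ for every admissible $(\omega_h,\delta_h)$.

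I do not expect any step here to be a genuine obstacle, because the real work --- the discrete Helmholtz-type decomposition of $V_h$ and the nonsingularity of the discrete Laplacians behind it --- has already been done in Lemmas~\ref{lem:nondivergent}--\ref{lem-orthogonal-decomp}. The only point that deserves care is the bookkeeping that makes $\int_\Omega\delta_h\,dx=0$ the \emph{sole} solvability condition; this is exactly what the count in Euler's formula records, reflecting that $\nabla_h\cdot(\,)$ has a one-dimensional cokernel (the constants) while $\tilde\nabla_h\times(\,)$ is already onto $\Psi_h$. If one prefers a constructive argument, one can instead seek $u_h=\tilde\nabla_h^\perp\psi_h+\nabla_h\varphi_h$: since $\nabla_h\cdot\tilde\nabla_h^\perp\psi_h=0$ and $\tilde\nabla_h\times\nabla_h\varphi_h=0$ (the sufficiency parts of Lemmas~\ref{lem:nondivergent} and \ref{lem:irrotational}), \eqref{eq:55} decouples into the two discrete elliptic problems $\tilde\nabla_h\times\tilde\nabla_h^\perp\psi_h=\omega_h$ on $\Psi_h$ and $\nabla_h\cdot\nabla_h\varphi_h=\delta_h$ on $\dot\Phi_h$, each solvable by the symmetry (via \eqref{eq:41}--\eqref{eq:42}) and definiteness already exploited in the proofs of Lemmas~\ref{lem-orthogonal-decomp} and \ref{lem:disc-poincare-vector}.
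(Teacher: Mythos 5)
Your proof is correct and follows essentially the same route as the paper's: count dimensions via Euler's formula \eqref{eq:13}, observe that the range of $(\tilde\nabla_h\times,\,\nabla_h\cdot)$ lies in $\Psi_h\times\dot\Phi_h$, and deduce surjectivity from the injectivity supplied by the orthogonal decomposition of Lemma~\ref{lem-orthogonal-decomp}. You merely make explicit two points the paper leaves terse --- why $\int_\Omega\delta_h\,dx=0$ is exactly the compatibility condition, and why triviality of the homogeneous system yields existence for the over-determined system --- which is a welcome tightening rather than a different argument.
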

\begin{proof}
  We rewrite the system \eqref{eq:55} in terms of the discrete
  variables associated with the cell centers, cell vertices, and cell
  edges,
  \begin{equation}
    \label{eq:79}
    \left\{
      \begin{aligned}
        &\dfrac{-1}{A_\nu}\sum_{e\in\EV(\nu)} u_e d_e t_{e,\nu} =
        \omega_\nu,& &1\le\nu\le N_v,\\
        &\dfrac{1}{A_i} \sum_{e\in \EC(i)} u_e l_e n_{e,i} =
        \delta_i,& & 1\le i\le N_c + N_{cb}.
      \end{aligned}
\right.
  \end{equation}
It is clear that there are $N_e + N_{eb}$ unknowns, and $N_v+N_c + N_{cb}$
equations. According to the Euler formula \eqref{eq:13}, there is one
more equation than the number of unknowns. Hence the data on the
right-hand side of \eqref{eq:79} need to satisfy some constraint so
that they may belong to the range of the linear operator associated
with the system on the left-hand side. This constraint is provided by
the assumption 
$\int_\Omega \delta_h dx = 0$, because the integral of the left-hand
side of the second equation in $~\eqref{eq:79}$ always vanishes. Hence
for arbitrary 
$(\omega_h,\,\delta_h)\in \Psi_h\times\Phi_h$ satisfying the
constraint, the system \eqref{eq:79} or \eqref{eq:55} has a unique
solution if and only if the homogeneous system 
\begin{equation}
\label{eq:80}
  \left\{
    \begin{aligned}
      & \tilde\nabla_h\times u_h = 0,\\
      & \nabla_h\cdot u_h = 0,
    \end{aligned}\right.
\end{equation}
has only trivial solutions, which is evident from Lemma
\ref{lem-orthogonal-decomp}. 
\end{proof}

\begin{lemma}\label{lem:ext-stab-conv}
  The external approximation that comprises of the function space $F$,
  the isomorphic mapping $\Pi$, and the family of triplets
  $(V_h,\,\Rh,\,\Ph)_{h\in\mathcal{H}}$ is a stable and convergent
  approximation of $V$.
\end{lemma}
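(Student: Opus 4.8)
The plan is to verify separately the three defining properties: stability of the prolongations $\Ph$ (Definition~\ref{def:stable-ext}), and the two convergence conditions (C1) and (C2) of Definition~\ref{def:conv-ext}. Stability is essentially immediate: from the definition~\eqref{eq:48} of $\Ph$ and the norm of $F=H=L^2(\Omega)\times L^2(\Omega)$ one has $\|\Ph u_h\|_F^2=|\tilde\nabla_h\times u_h|_{0,h}^2+|\nabla_h\cdot u_h|_{0,h}^2=|u_h|_{1,h}^2$, so once the seminorm~\eqref{eq:31} is taken as the norm of $V_h$ --- which is legitimate by the discrete Poincar\'e inequality, Lemma~\ref{lem:disc-poincare-vector} --- we get $\|\Ph u_h\|_F=\|u_h\|_{V_h}$, hence $\|\Ph\|=1$ uniformly in $h$.

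For (C1) I would first invoke the reduction recalled after Definition~\ref{def:conv-ext} (see \cite[Section~3.4]{Temam:1980wr}): thanks to the stability just proved, it suffices to establish~\eqref{eq:81} for $\ub$ in the dense subspace $\V$ of~\eqref{eq:49}, and the (possibly nonlinear) extension to all of $V$ then comes for free. So fix $\ub\in\V$, set $(\omega,\delta)=\Pi\ub$ --- both smooth up to $\partial\Omega$ --- and recall that $\Rh\ub=u_h$ solves~\eqref{eq:55}, whence $\Ph\Rh\ub=(\tilde\nabla_h\times u_h,\,\nabla_h\cdot u_h)=(\omega_h,\delta_h)$ with $\omega_h,\delta_h$ the discrete fields of~\eqref{eq:50}--\eqref{eq:51}. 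It then remains to show $\delta_h\to\delta$ and $\omega_h\to\omega$ in $L^2(\Omega)$, which is the elementary fact that cell averages (or nodal values) of a Lipschitz function converge: for $x\in A_i$ one has $|\delta(x)-\delta_i|\le\mathrm{diam}(A_i)\,\|\nabla\delta\|_{L^\infty}\le Mh\,\|\nabla\delta\|_{L^\infty}$ by~\eqref{eq:16}, and summing the squares over the interior and partial boundary cells gives $|\delta_h-\delta|_0\le Ch\to 0$; the identical estimate covers $\omega_h$ whether $\omega_\nu$ is a mean of $\omega$ over $A_\nu$ or the point value $\omega(x_\nu)$ of~\eqref{eq:54}. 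This yields~\eqref{eq:81} on $\V$, hence on $V$.

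For (C2) I would argue as follows. Suppose $\Ph u_h=(\tilde\nabla_h\times u_h,\,\nabla_h\cdot u_h)\rightharpoonup\phi=(\phi_1,\phi_2)$ weakly in $F$; since $F$ is a product of Hilbert spaces this is componentwise weak convergence in $L^2(\Omega)$. Testing the second component against the constant $1\in L^2(\Omega)$ and using that $\int_\Omega\nabla_h\cdot u_h\dx=0$ for every $h$ --- the cancellation already noted in the proof of Lemma~\ref{lem:curl-div}, each interior edge contributing twice with opposite signs and the boundary contributions vanishing by the no-flux convention built into~\eqref{eq:26} --- gives $\int_\Omega\phi_2\dx=0$. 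It then suffices to exhibit $\ub\in V$ with $\Pi\ub=\phi$. Since $\Omega$ is bounded, polygonal and simply connected, I would use the Helmholtz recipe: solve the Dirichlet problem $\Delta\psi=\phi_1$, $\psi|_{\partial\Omega}=0$ and the Neumann problem $\Delta q=\phi_2$, $\partial_n q|_{\partial\Omega}=0$ --- the latter solvable precisely because $\phi_2$ has zero mean --- and set $\ub=\nabla^\perp\psi+\nabla q$ (adjusting the sign of $\psi$ to match the convention of~\eqref{eq:46} if needed); then $\ub\in H^\div_0(\Omega)\cap H^\curl(\Omega)=V$ with $\nabla\times\ub=\phi_1$, $\nabla\cdot\ub=\phi_2$, i.e.\ $\Pi\ub=\phi$. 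Equivalently, one may simply quote that $\Pi(V)$ is exactly the zero-mean subspace of $F$, which is the Helmholtz decomposition on a simply connected domain, \cite[Section~3]{Girault:1986vn}. This establishes (C2) and completes the proof.

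I do not expect a genuine obstacle: every ingredient is either elementary or already in hand from Lemmas~\ref{lem:disc-poincare-vector} and~\ref{lem:curl-div}. The two points that deserve attention are (i) the bookkeeping at the partial boundary primary cells, both in the $L^2$ estimate for $\delta_h$ in (C1) and in the vanishing of $\int_\Omega\nabla_h\cdot u_h\dx$ in (C2); and (ii) pinning down the range $\Pi(V)$ as precisely the zero-mean subspace of $F$, which is the one place where simple connectedness of $\Omega$ is actually used.
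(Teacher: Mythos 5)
Your proposal is correct and follows essentially the same route as the paper's proof: stability from $\|\Ph u_h\|_F=|u_h|_{1,h}$, condition (C1) via the $O(h)$ $L^2$-estimate for cell averages/point values of the smooth fields $\omega$ and $\delta$ on the dense subspace $\V$, and condition (C2) by identifying the range of $\Pi$ with the zero-mean-divergence subspace of $F$ and testing the weak limit against $(\tilde\omega,\tilde\delta)=(0,1)$. The only difference is that you spell out the Helmholtz construction justifying the characterization of $\Pi(V)$, which the paper simply asserts.
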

\begin{proof}
  By Definition \ref{def:stable-ext}, the external approximation is
  stable because the prolongation operator is stable. 

For convergence, we need to verify the two conditions specified in
Definition \ref{def:conv-ext}. We only need to verify \eqref{eq:81}
for $\ub\in\mathcal{V}$ (see \eqref{eq:49}). For an arbitrary $\ub\in
\mathcal{V}$, we let 
$(\omega,\delta) = \Pi\ub$, and, for each $1\le \nu\le N_v$ and $1\le
i\le N_c + N_{cb}$,
\begin{equation*}
  \omega_\nu = \overline\omega^{A_\nu},\qquad \delta_i =
  \overline{\delta}^{A_i}. 
\end{equation*}
Then by the definition \eqref{eq:56}  of the restriction operator
$\Rh$ and the definition \eqref{eq:48} of the prolongation operator
$\Ph$, 
\begin{equation*}
  \Ph \Rh \ub = \left(\sum_{\nu=1}^{N_v}\omega_\nu \chi_\nu,\,
    \sum_{i=1}^{N_c + N_{cb}}\delta_i\chi_i\right), 
\end{equation*}
and 
\begin{align*}
  \| \Ph \Rh \ub - \Pi \ub\|_F^2 =&
  \int_\Omega\left(\sum_{\nu=1}^{N_v}\omega_\nu\chi_\nu -
    \omega\right)^2 dx +
  \int_\Omega\left(\sum_{i=1}^{N_c + N_{cb}}\delta_i\chi_i - \delta\right)^2
  dx\\
=&  \sum_{\nu=1}^{N_v}\int_{A_\nu}(\omega_\nu -
    \omega)^2 dx +
  \sum_{i=1}^{N_c + N_{cb}}\int_{A_i}(\delta_i - \delta)^2
  dx\\
\le &\left(|\nabla\omega|^2_\infty + |\nabla\delta|^2_\infty\right)
\left(\sum_{\nu=1}^{N_v}\int_{A_\nu} 1dx + \sum_{i=1}^{N_c + N_{cb}}\int_{A_i}
  1dx\right) h^2\\
\le&2 |\Omega| \left(|\nabla\omega|^2_\infty +
  |\nabla\delta|^2_\infty\right) h^2.
\end{align*}
Therefore $\|\Ph \Rh\ub - \Pi \ub\|$ tends to zero as fast as the mesh
resolution $h$ tends to zero.

For the second condition of Definition \ref{def:conv-ext}, we note
that $(\omega,\delta)\in F$ is in the range of the linear operator
$\Pi$ if and only if $\int_\Omega \delta dx = 0$. Hence we only need
to verify that, if $(\omega,\delta)$ is the limit of the some sequence
$\Ph u_h$ in the weak topology of $F$, then $\int_\Omega \delta dx =
0$. The weak convergence of $\Ph u_h$ implies that
\begin{equation*}
  (\tilde\nabla_h\times u_h, \tilde\omega) + (\nabla_h\cdot
  u_h,\tilde\delta_h) \xrightarrow{\hspace{1cm}} (\omega,\tilde\omega)
  + (\delta,\tilde\delta),\qquad\forall (\tilde\omega,\tilde\delta)\in F.
\end{equation*}
If we set $\tilde\omega = 0$ and $\tilde\delta = 1$, then we have
\begin{equation*}
  0 = (\nabla_h\cdot u_h,\,1) \xrightarrow{\hspace{1cm}} \int_\Omega
  \delta dx.
\end{equation*}
Hence
\begin{equation*}
  \int_\Omega \delta dx = 0.
\end{equation*}
\end{proof}

\begin{figure}[h]
  \centering
  \includegraphics[width=6.5cm]{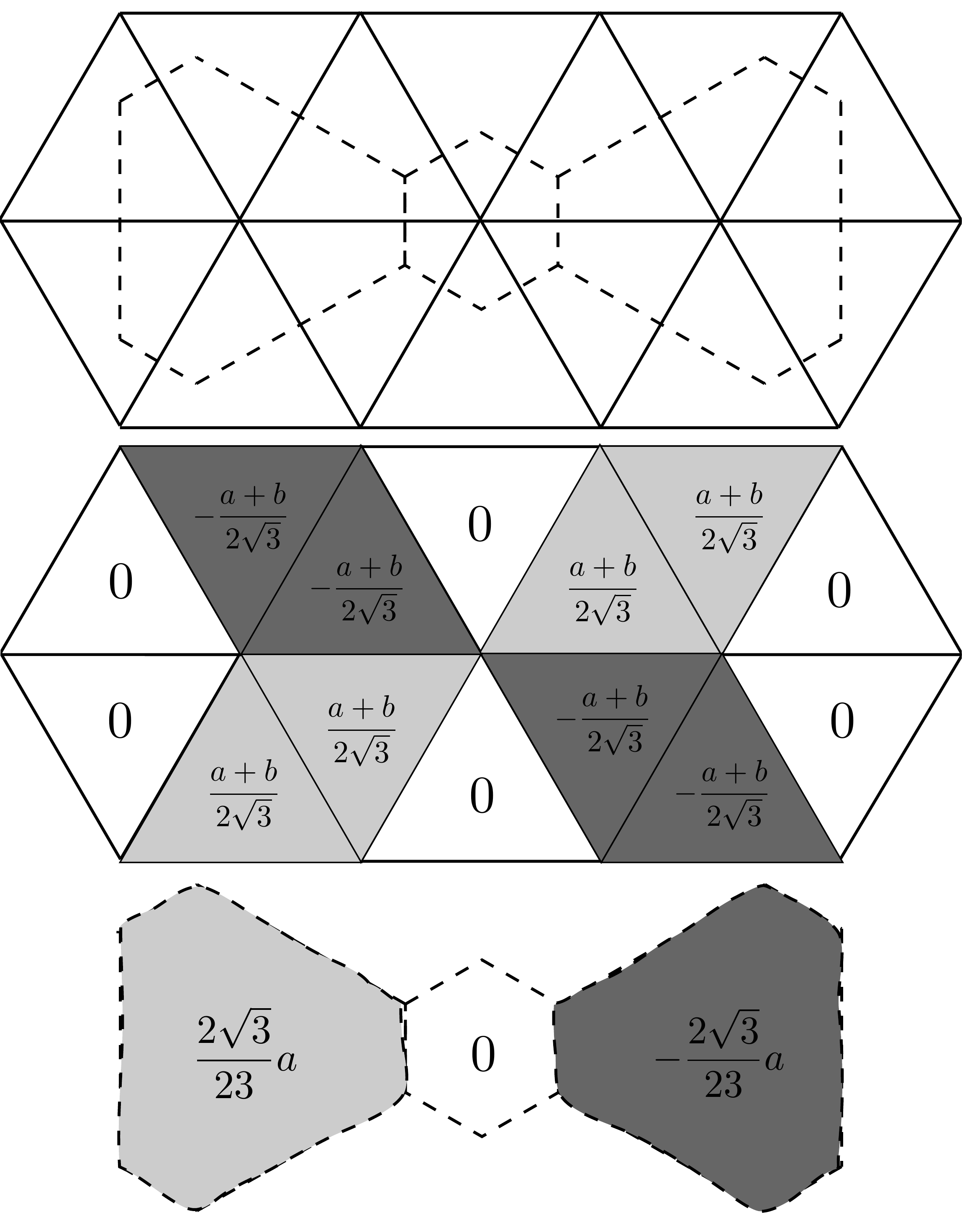}
  \caption{Defining $\Rh\ub$ by averaging the normal components of
    $\ub$ along the primary or dual cell edges can lead to
    inconsistently defined differential operators. Upper panel: the primary mesh
    consists of {\itshape equilateral} triangles, and the dual mesh consists of
    non-uniform but {\itshape equiangular} hexagons. The dual cell edge
    intersects the primary cell edge either at the mid-point or at the
  one-third point. Every two neighboring primary cell centers are
  equi-distant to the common edge that separates them. Middle panel: a
  non-diminishing scalar divergence field that results from averaging
  the normal components of $\ub = (ay,\,bx)$ along dual cell
  edges. Lower panel: a 
  non-diminishing scalar vorticity field that results from averaging
  the normal components of $\ub=(ax,\,by)$ along the primary cell edges. }
  \label{fig:trig-hex-third}
\end{figure}

\begin{remark}
It is tempting to define $\Rh \ub$ as the
average of the normal component of $\ub$ along one of the edges ($\l_e$
or $d_e$, see Figure \ref{fig:notations}). However, it is also
well-known in the finite volume literature (\cite{Eymard:2000tt,
  {Faure:2006ky}} that volume or area or length averaging leads to
inconsistently defined differential operators. In our terminologies,
condition C1 of Definition \ref{def:conv-ext} may be violated if the
restriction operator $\Rh$ is defined this way. One-dimensional
examples have been given in the two references just cited. Here we
give a two-dimensional example, in which the primary mesh consists of
{\itshape equilateral} triangles, and the dual mesh consists of non-uniform but
{\itshape equiangular} hexagons (see the upper panel of Figure
\ref{fig:trig-hex-third}).  Around each triangle, the dual cell edge
(dashed line) intersects the primary cell 
edge (solid line) either at the one-third point, or at the
mid-point. The triangular mesh
is Voronoi in the sense that every two neighboring cell centers are
equi-distant to the common edge between them, but the staggered meshes
are not the classical Delaunay-Voronoi meshes, because the roles of
the triangles and the roles of the hexagons are mutated. 
First, we discuss the case when the normal component of $\ub$ is
averaged along the dual cell edge (dashed lines). We let
$\ub=(ay,\,bx)$, where $a$ and $b$ are two arbitrary
constants. The analytic divergence $\delta = u_x + v_y$
vanishes 
everywhere. The discrete divergence $\delta_h = \nabla_h\cdot \Rh
\ub$ is a piecewise constant function on the primary mesh, and it can take three 
values, $0$, $(a+b)/2\sqrt{3}$ and $-(a+b)/2\sqrt{3}$. The
distribution pattern of these discrete values are shown in the middle
panel of Figure \ref{fig:trig-hex-third}. Clearly, for $a+b\ne 0$, $\delta_h$ does not
converge to $\delta$ in the $L^2$-norm as the mesh refines. It only
converges weakly. For 
the case of averaging along the primary cell edges (solid lines), we
let $\ub = (ax,\,by)$, again $a$, $b$ being arbitrary constants. The
analytic vorticity $\omega = v_x - u_y$ vanishes everywhere. But the
discrete vorticity $\omega_h = \nabla_h\times \Rh\ub$, which are
piecewise constant functions on the dual 
mesh, takes three possible values, $0$, $2\sqrt{3}a/23$ and
$-2\sqrt{3}a/23$, and the distribution pattern of these discrete
values is shown in the lower panel of Figure
\ref{fig:trig-hex-third}. It is clear that, for $a\ne 0$, the discrete
vorticity $\omega_h$ does not converge to the analytic vorticity
$\omega$ in the $L^2$-norm, as the mesh refines. It only converges
weakly. 
\end{remark}

\section{The linear incompressible Stokes problem}\label{sec:line-incompr-stok}

As in the previous section, we assume that $\Omega$ is an open,
bounded, and 
simply-connected domain with piece-wise smooth boundaries. The
incompressible Stokes problem reads 
\begin{equation}
\label{eq:93}
  \left\{{
  \begin{aligned}
    -\Delta\ub + \nabla p &= \bs{f},& &\Omega,\\ 
    \nabla\cdot\ub &= 0,& &\Omega,\\
    \ub &= 0, & &\partial\Omega.
  \end{aligned}}\right.
\end{equation}
 Using the vector identity
\begin{equation}
\label{eq:94}
  \Delta\ub = \nabla(\nabla\cdot\ub) + \nabla^\perp(\nabla\times\ub),  
\end{equation}
and the incompressibility condition, one derives the vorticity
formulation of the Stokes problem,
\begin{equation}
\label{eq:95}
  \left\{{
  \begin{aligned}
    -\nabla^\perp\nabla\times\ub + \nabla p &= \bs{f},& &\Omega,\\
    \nabla\cdot\ub &= 0,& &\Omega,\\
    \ub &= 0, & &\partial\Omega.
  \end{aligned}}\right.
\end{equation}
Here, $\nabla^\perp = \kb\times\nabla$, $\kb$ being the upward unit
vector, denotes the skewed gradient 
operator. We will use the vorticity formulation, since it 
highlights the role of vorticity, and seems
most suitable for staggered-grid discretization techniques. 

The natural functional space for the solution of
the Stokes problem \eqref{eq:93} is
$$V = \{\ub\in H^1_0(\Omega)\, | \, \nabla\cdot\ub =
0\textrm{ in } \Omega\}.$$
It is a Hilbert space under
the norm
\begin{equation}
  \label{eq:106}
  \|\ub\|_V^2 = |\nabla\times\ub|_0^2.
\end{equation}
By
integration by parts, we obtain the weak formulation of \eqref{eq:93}:
\begin{quote}
{\itshape  For each $\bs{f}\in L^2(\Omega)\times L^2(\Omega)$, find
  $\ub\in V$ such that 
  \begin{equation}
\label{eq:96}
    (\nabla\times\ub,\, \nabla\times\vb) = (\bs{f},\,\vb)\quad \forall
    \vb\in V.
  \end{equation}}
\end{quote}

It is clear that changes need to be made regarding
the discrete function space $V_h$ in order to accommodate the
incompressibility condition and the no-slip boundary condition, both
present in the new function space $V$. The previous definition \eqref{eq:24}
 of $V_h$ is changed to
\begin{equation}
\label{eq:97}
V_h = \left\{ u_h = \sum_{e = 1}^{N_e} u_e \chi_e \nb_e \,\biggr\vert\,\,
  \nabla_h\cdot u_h = 0.\right\} 
\end{equation}
The discrete divergence 
operator $\nabla_h\cdot(\,)$ and the discrete curl operator
$\tilde\nabla_h\times(\,)$ are defined as before, but with $V_h$
defined as in \eqref{eq:97},
the edges on the boundary has no effect on either the
divergence or the vorticity. 
The discrete Poincar\'e inequality still applies in this case, and
$V_h$ is a Hilbert space with the norm
\begin{equation}
  \label{eq:108}
  \|u_h\|_{V_h} \equiv |u_h|_{1,h}\equiv |\tilde\nabla_h\times u_h|_0.
\end{equation}

\subsection{External approximation of $V$}\label{sec:extern-appr-v-1} 
Since all functions in $V$ are divergence free, there is a one-to-one
correspondence between $V$ and $H^2_0(\Omega)$
(\cite{Girault:1986vn}).  
We let 
$$F =L^2(\Omega)\times L^2(\Omega).$$
For each $\ub\in V$, we let $\psi\in H^2_0(\Omega)$ and $\omega\in
L^2(\Omega)$ such that 
\begin{equation}
\label{eq:174}
\ub = \nabla^\perp \psi,\qquad \nabla\times\ub = \omega.
\end{equation}
Then we redefine the isomorphism $\Pi$ from $V$ into $F$ as
\begin{equation}
  \label{eq:107}
  \Pi\ub = (\psi,\,\omega)\in F,\qquad\forall \ub\in V.
\end{equation}
The space $F$ is endowed with the usual $L^2$-norm. In view of the
norm for $V$, it is clear that $\Pi$ is an isomorphism from $V$ into
$F$. It is important to note that the image of the operator $\Pi$ is
not the whole space $F$. It is a nowhere dense, closed subspace of the
latter. Given a vector function $(\psi,\,\omega)$ in $F$, there is no
known method to determine whether it is the image of some element
$\ub$ in $V$, other than solving an elliptic or biharmonic
equation. For this reason, we need a straightforward way of specifying
a restriction operator that is convergent. 
The specifications for the meshes are the same as in Section
\ref{sec:specification-mesh}. For this problem, we further 
assume that the primary cell edge and the dual cell edge bisect each
other 
in the interior of the domain. This requirement is stronger than what
is actually needed (see Remark \ref{rmk-h2}). 

Under the foregoing assumptions on the meshes, we now define the
restriction operator $\Rh$. We only need to define 
$\Rh$ for a dense subset of functions of $V$, and the definition can
then be extended to the whole space of $V$ according to a result in
\cite{Temam:1980wr}. We let 
\begin{equation}
\label{eq:170}
\mathcal{V} = \{ \ub \in \mathcal{D}(\Omega)\,|\, \nabla\cdot\ub = 0\},
\end{equation}
which is a dense subspace of $V$. For an arbitrary $\ub
\in\mathcal{V}$, there exists $\psi\in\D(\Omega)$ such that $\ub =
\nabla^\perp\psi$, and thus $\omega = \Delta\psi$. We then
define the 
associated discrete scalar field as
\begin{equation}
\label{eq:172}
\psi_h =  \sum_{\nu=1}^{N_v}\psi_\nu \chi_\nu,
\end{equation}
with
\begin{align}
  \psi_\nu &= \psi(x_\nu), & &\textrm{on interior dual cells,}\label{eq:112}\\
  \psi_\nu &= 0, & &\textrm{on dual cells on the  
    boundary,}  \label{eq:110}
\end{align} 
where $x_\nu$ is the coordinates for the center of dual cell
$\nu$. We note that the function $\psi$ has compact support on $\Omega$, and
therefore, if the grid resolution $h$ is fine enough, the
specification \eqref{eq:110} is consistent with \eqref{eq:112}. 
Finally, the
restriction operator $\Rh$ on $\ub\in\mathcal{V}$ is defined as
\begin{equation}
  \label{eq:113}
  \Rh\ub = \tilde\nabla_h^\perp\psi_h.
\end{equation}
That $\Rh\ub$ is divergence free in the discrete sense is guaranteed
by Lemma \ref{lem:nondivergent}. It vanishes on the boundary thanks
to the condition \eqref{eq:110} and the definition \eqref{eq:40} for
the skewed gradient operator on the boundary.

To define the prolongation operator $\Ph$, we note that, by the virtue
of Lemma \ref{lem:nondivergent}, every $u_h\in V_h$ is represented by
a scalar field $\psi\in\Psi_h$ via
\begin{equation}
  \label{eq:161}
  u_h = \tilde\nabla_h^\perp \psi_h.
\end{equation}
The prolongation operator $\Ph$ is defined as
\begin{equation}
  \label{eq:114}
  \Ph u_h = (\psi_h, \,\tilde\nabla_h\times u_h),\qquad\forall u_h \in V_h.
\end{equation}

The external approximation of $V$ consists of the mapping pair
$(F,\,\Pi)$ and the family of triplets
$\{V_h,\,\Rh,\,\Ph\}_{h\in\mathcal{H}}$. Concerning this approximation
we have the following claim.

\begin{theorem}\label{thm:stable-conv-h1_0}
  The external approximation that consists of the function space $F$,
  the isomorphic mapping $\Pi$, and the family of triplets $\{V_h,\,
  \Rh,\, \Ph\}_{h\in\mathcal{H}}$ is a stable and convergent
  approximation of $V$.
\end{theorem}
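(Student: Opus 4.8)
The plan is to check, in order, the stability of $\Ph$ and the two convergence conditions (C1)--(C2) of Definition \ref{def:conv-ext}, along the lines of Lemma \ref{lem:ext-stab-conv}; the genuinely new feature is that the range of $\Pi$ is now a nowhere dense closed subspace of $F$, so the identification of weak limits in (C2) will require real work. Stability should be immediate: writing an arbitrary $u_h\in V_h$ as $u_h=\tilde\nabla_h^\perp\psi_h$ via Lemma \ref{lem:nondivergent}, one has $\|\Ph u_h\|_F^2=|\psi_h|_0^2+|\tilde\nabla_h\times u_h|_0^2$, and since $|\psi_h|_{1,h}=|\tilde\nabla_h^\perp\psi_h|_{0,h}=|u_h|_{0,h}$, the discrete Poincar\'e inequality \eqref{eq:85} followed by \eqref{eq:86} (which for divergence-free $u_h$ reads $|u_h|_{0,h}\le C|\tilde\nabla_h\times u_h|_0$) bounds $|\psi_h|_0$ by $C\|u_h\|_{V_h}$, so $\|\Ph\|$ is bounded uniformly in $h$.

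For (C1) I would use the density argument of \cite[Section 3.4]{Temam:1980wr} to reduce to $\ub\in\V$ (see \eqref{eq:170}); writing $\ub=\nabla^\perp\psi$ with $\psi\in\D(\Omega)$ gives $\omega=\Delta\psi$ and $\Pi\ub=(\psi,\omega)$, while for $h$ small $\Rh\ub=\tilde\nabla_h^\perp\psi_h$ with $\psi_h=\sum_{\nu}\psi(x_\nu)\chi_\nu$ vanishing on the boundary dual cells, so that $\Ph\Rh\ub=(\psi_h,\tilde\nabla_h\times\tilde\nabla_h^\perp\psi_h)$. The first slot is handled by a one-term Taylor estimate on each dual cell, $\|\psi_h-\psi\|_{L^2(\Omega)}\le C\,h\,|\Omega|^{1/2}\|\nabla\psi\|_\infty\to0$. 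For the second slot, a short computation (using $t_{e,\nu}=-t_{e,\nu'}$ on interior edges) identifies $[\tilde\nabla_h\times\tilde\nabla_h^\perp\psi_h]_\nu$ with the two-point-flux finite-volume Laplacian $A_\nu^{-1}\sum_{e\in\EV(\nu)}(d_e/l_e)(\psi_{\nu'}-\psi_\nu)$ on the dual mesh, and this is exactly where the hypothesis that the two edges bisect each other is used: since the dual edge $e^*$ (length $d_e$) is crossed orthogonally by the segment joining the two dual centers (length $l_e$) at their common midpoint, the midpoint quadrature rule gives $(d_e/l_e)(\psi(x_{\nu'})-\psi(x_\nu))=\int_{e^*}\partial_n\psi\,\ud s+O(h^3)$, and the leading $O(1)$ contribution cancels because $\tilde\nabla_h\times\tilde\nabla_h^\perp$ annihilates constant vector fields (a constant field is a discrete gradient, cf.\ Lemma \ref{lem:irrotational}). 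Summing over $\EV(\nu)$, applying the divergence theorem on $D_\nu$, and using the regularity \eqref{eq:16} (so that $A_\nu\gtrsim h^2$ and the perimeter of $D_\nu$ is $\lesssim h$), I expect the pointwise consistency $|[\tilde\nabla_h\times\tilde\nabla_h^\perp\psi_h]_\nu-\Delta\psi(x_\nu)|\le Ch\|\psi\|_{C^3}$, hence strong $L^2(\Omega)$-convergence of the second slot to $\omega$; with the first slot this yields (C1). (Absent such a geometric hypothesis (C1) may fail, in the spirit of the cautionary Remark in Section \ref{sec:appr-vect-fields}.)

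For (C2), let $\Ph u_h=(\psi_h,\tilde\nabla_h\times u_h)\rightharpoonup(\psi,\omega)$ weakly in $F$; then both slots are bounded, and, exactly as in the stability step, $|\psi_h|_{1,h}=|u_h|_{0,h}\le C|\tilde\nabla_h\times u_h|_0$ is bounded, so $\psi_h$ is bounded in the discrete $H^1_0$ norm (it vanishes on the dual cells touching $\partial\Omega$ because $u_e=0$ on boundary edges). A discrete Rellich-type compactness argument for staggered meshes, in the style of \cite{Eymard:2000tt}, then gives $\psi_h\to\psi$ strongly in $L^2(\Omega)$ with $\psi\in H^1_0(\Omega)$. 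To identify $\omega$, fix $\phi\in\D(\Omega)$, set $\phi_h=\sum_\nu\phi(x_\nu)\chi_\nu$, and use the self-adjointness of the discrete dual-mesh Laplacian obtained by applying \eqref{eq:42} twice, $(\tilde\nabla_h\times\tilde\nabla_h^\perp\psi_h,\phi_h)_{0,h}=-2(\tilde\nabla_h^\perp\psi_h,\tilde\nabla_h^\perp\phi_h)_{0,h}=(\psi_h,\tilde\nabla_h\times\tilde\nabla_h^\perp\phi_h)_{0,h}$. By the consistency from (C1) applied to $\phi$, $\tilde\nabla_h\times\tilde\nabla_h^\perp\phi_h\to\Delta\phi$ and $\phi_h\to\phi$ in $L^2(\Omega)$; passing to the limit (a weakly convergent factor against a strongly convergent one in each inner product, using $\tilde\nabla_h\times u_h\rightharpoonup\omega$ and $\psi_h\to\psi$) gives $(\omega,\phi)_{L^2}=(\psi,\Delta\phi)_{L^2}$ for every $\phi\in\D(\Omega)$, i.e.\ $\Delta\psi=\omega$ in $\D'(\Omega)$. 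Since $\omega\in L^2(\Omega)$ this forces $\psi\in H^2_0(\Omega)$, and by the correspondence $V\cong H^2_0(\Omega)$ recalled from \cite{Girault:1986vn} the pair $(\psi,\omega)$ equals $\Pi\ub$ for $\ub=\nabla^\perp\psi\in V$, which is (C2).

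I expect the main obstacle to be the $L^2$-consistency of the dual-mesh discrete Laplacian in (C1): one must see that the bisection hypothesis is precisely what is needed for the two-point-flux formula to be consistent, and must verify the cancellation of the leading term; a secondary technical cost is importing and adapting the discrete compactness theorem used in (C2). Stability and the first-slot estimate in (C1) are routine.
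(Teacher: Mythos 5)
Your stability argument and your verification of (C1) track the paper's proof closely (the paper is terser about the consistency of the discrete Laplacian, but the bisection/midpoint-quadrature mechanism you describe is exactly the one invoked via \eqref{eq:132}), so those parts are fine. The genuine gap is in (C2), at the final step ``Since $\omega\in L^2(\Omega)$ this forces $\psi\in H^2_0(\Omega)$.'' What you have actually established at that point is $\psi\in H^1_0(\Omega)$ and $\Delta\psi=\omega$ in $\D'(\Omega)$; elliptic regularity then gives at best $\psi\in H^2(\Omega)\cap H^1_0(\Omega)$, not $H^2_0(\Omega)$. Membership in the range of $\Pi$ requires $\ub=\nabla^\perp\psi\in H^1_0(\Omega)$, i.e.\ the \emph{clamped} conditions $\psi=\partial\psi/\partial n=0$ on $\partial\Omega$, and the Neumann trace cannot be extracted from your argument: you test only against $\phi\in\D(\Omega)$, so the identity $(\omega,\phi)=(\psi,\Delta\phi)$ carries no boundary information beyond what $\psi\in H^1_0$ already encodes. (Indeed, for a generic $\omega\in L^2$ the solution of the Dirichlet problem has nonvanishing normal derivative, so the implication you assert is false as stated.)

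The paper closes exactly this hole by a different device: extend $\psi_h$, $\psi$, $\Delta_h\psi_h$ and $\omega$ by zero to all of $\R^2$, observe that the homogeneous values of $\psi_h$ on the dual cells adjoining $\partial\Omega$ give $(\Delta_h\psi_h)^\#=\Delta_h\psi_h^\#$ on the extended mesh, and test against $\psi'\in\D(\R^2)$ (not $\D(\Omega)$). Passing to the limit in the global discrete integration-by-parts identity yields $\Delta\psi^\#=\omega^\#$ in $\D'(\R^2)$ with $\omega^\#\in L^2(\R^2)$, hence $\psi^\#\in H^2(\R^2)$; since $\psi^\#$ vanishes identically outside $\Omega$, \emph{both} traces $\psi$ and $\partial\psi/\partial n$ vanish on $\partial\Omega$, which is the missing conclusion. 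Note also that this route makes your discrete Rellich compactness step unnecessary: weak $L^2$ convergence of $\psi_h^\#$ suffices to pass to the limit, since the test-function slot $\Delta_h\psi'_h$ converges strongly. You should replace your interior-test-function argument for (C2) by this extension-by-zero argument (or supply some other mechanism that produces the normal-derivative condition); as written the proof of (C2) does not go through.
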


\begin{proof}
  According to Definition \ref{def:stable-ext}, the approximation is
  stable if the prolongation operators $\Ph$ are stable, which is
  evidently the case, in view of the specifications \eqref{eq:108} and
  \eqref{eq:114}, and the discrete Poincar\'e inequality. 

The approximation is convergent if Conditions (C1) and (C2) of Definition
\ref{def:conv-ext} are met. According to \cite{Temam:1980wr}, it is
only necessary to verify Condition (C1)  for the space $\mathcal{V}$.
 Let $\ub\in\mathcal{V}$ and $\Pi\ub = (\psi,\,\omega)\in F$, and let
 $\psi_h$ be defined as in \eqref{eq:112}-~\eqref{eq:110}. The (C1)
 condition is verified once we show that, as the grid resolution $h$ tends to
  zero, the discrete scalar field $\psi_h$ defined by 
  \eqref{eq:112}-~\eqref{eq:110}  converges strongly to $\psi$ in
  $L^2(\Omega)$, and 
  $\tilde\nabla_h\times\tilde\nabla_h^\perp\psi_h$ converges strongly to
  $\omega$ in $L^2(\Omega)$. 
The first claim can be easily verified by an application of the Taylor
series expansion of $\psi$. 
The second claim reflects the consistency of the discrete  Laplacian
operator on this mesh. Indeed, we note that, because the primary cell edge
$l_e$ and the dual cell edge $d_e$ bisect each other, 
\begin{equation}
[\tilde\nabla^\perp \psi_h]_e =
\overline{\left(\frac{\partial\psi}{\partial t_e}\right)}^{d_e} + O(h^2),
\end{equation}
where the overbar $\overline{\phantom{A}}^{d_e}$ denotes averaging along the dual
cell $d_e$. 
Applying the discrete curl operator to the above, we obtain
\begin{equation}
  \label{eq:132}
  [\tilde\nabla_h\times\tilde\nabla_h^\perp\psi_h]_\nu =
  \overline{(\Delta\psi)}^{A_\nu} + O(h),
\end{equation}
with the overbar $\overline{\phantom{A}}^{A_\nu}$ denoting averaging over the
dual cell $A_\nu$.
The claim can then be authenticated by application of the Taylor
series expansion to $\Delta\psi$.

For Condition (C2), we assume that a sequence $\{\Ph
u_h\}_{h\in\mathcal{H}}$, with $u_h\in V_h$, converges weakly to an
element $(\psi,\,\omega)\in F$, that is, as $h\longrightarrow 0$,
\begin{equation}
  \label{eq:111}
  \Ph u_h \rightharpoonup (\psi,\,\omega)\qquad\textrm{weakly in } F,
\end{equation}
which means that, according to definition \eqref{eq:114},
\begin{align}
 \psi_h &\rightharpoonup \psi\qquad\textrm{weakly in }
 L^2(\Omega),  \label{eq:162}\\ 
  \Delta_h \psi_h &\rightharpoonup \omega\qquad\textrm{weakly in }
  L^2(\Omega).\label{eq:163} 
\end{align}
Here, $\psi_h$ is a scalar field such that $u_h =
\tilde\nabla^\perp_h\psi_h$. 

We claim that 
\begin{align}
  &\psi\in H^2(\Omega), & & \label{eq:120}\\
  & \Delta\psi = \omega,& &\Omega\label{eq:121}\\
  &\psi = \dfrac{\partial\psi}{\partial n} = 0,&
  &\partial\Omega.\label{eq:122} 
\end{align}
Once these properties are verified, we set $\ub = \nabla^\perp
\psi$. It is clear then that $\ub\in V$ and $(\psi,\,\omega) = \Pi \ub$. 

We let 
\begin{align*}
  &\psi_h^\# & &= & &\textrm{extension of }\psi_h\textrm{ outside 
  }\Omega\textrm{ by zero},\\
  &\psi^\# & &= & &\textrm{extension of }\psi\textrm{ outside 
  }\Omega\textrm{ by zero},\\
  &(\Delta_h \psi_h)^\# & &= & &\textrm{extension of }\Delta_h \psi_h\textrm{ outside 
  }\Omega\textrm{ by zero},\\
  &\omega^\# & &= & &\textrm{extension of }\omega\textrm{ outside 
  }\Omega\textrm{ by zero}.\\
\end{align*}
The mesh $\mathcal{T}_h$ is also extended outside $\Omega$, and the
extended mesh $\mathcal{T}^\#_h$ satisfies the aforementioned requirements. 
We note that, thanks to the boundary conditions on $\psi_h$, 
\begin{equation}
  \label{eq:123}
  (\Delta_h\psi_h)^\# = \Delta_h \psi_h^\#.
\end{equation}
Convergences \eqref{eq:162} and \eqref{eq:163} imply that 
\begin{align}
   \psi^\#_h &\rightharpoonup \psi^\#,\qquad\textrm{weakly in }
  L^2(\mathbb{R}^2). \label{eq:124}\\
  \Delta_h \psi^\#_h &\rightharpoonup \omega^\#,\qquad\textrm{weakly 
    in } L^2(\mathbb{R}^2).\label{eq:125}  
\end{align}
We let $\psi'\in\mathcal{D}(\mathbb{R}^2)$, and let $\psi'_h =
\sum\psi'(x_\nu)\chi_\nu$.
Thanks to the compact support of $\psi'$, it can be shown in a
similar fashion as in the verification of the (C1) condition above that
\begin{align}
   \psi'_h &\longrightarrow \psi',\qquad\textrm{strongly in }
  L^2(\mathbb{R}^2).\label{eq:126}\\
  \Delta_h \psi'_h &\longrightarrow \Delta\psi',\qquad\textrm{strongly
    in } L^2(\mathbb{R}^2).\label{eq:127}
\end{align}
The following integration-by-parts formula holds for $\psi^\#_h$ and
$\psi'_h$, 
\begin{equation}
  \label{eq:128}
  (\psi^\#_h,\,\Delta_h\psi'_h) = (\Delta_h\psi^\#_h,\,\psi'_h).
\end{equation}
Thanks to the weak convergences \eqref{eq:162} and \eqref{eq:163} and
the strong convergences \eqref{eq:126} and \eqref{eq:127}, we can pass
to the limit in \eqref{eq:128} and obtain
\begin{equation}
  \label{eq:129}
  (\psi^\#,\, \Delta\psi') = (\omega^\#,\,\psi'),
\end{equation}
which implies that 
\begin{equation}
  \label{eq:130}
  \Delta\psi^\# = \omega^\#\qquad\textrm{in }\D'(\mathbb{R}^2).
\end{equation}
This relation, together with the fact that $\psi^\#\in L^2(\R^2)$
and $\omega^\#\in L^2(\R^2)$, implies that 
\begin{equation}
  \label{eq:131}
  \psi^\# \in H^2(\R^2).
\end{equation}
Restricted to the domain $\Omega$, \eqref{eq:131} and \eqref{eq:130}
imply \eqref{eq:120} and \eqref{eq:121}, respectively. The boundary
conditions \eqref{eq:122} for $\psi$ follow from the fact that
$\psi^\#\in H^2(\R^2)$, and $\psi^\#$ vanishes entirely outside
$\Omega$.  
\end{proof}

\begin{remark}\label{rmk-h2}
  The bisecting requirement on the meshes can be relaxed without
  affecting the convergence conclusion of Theorem
  \ref{thm:stable-conv-h1_0}. Specifically, the C1 condition for
  convergence relies on the second order accuracy of
  $[\tilde\nabla^\perp\psi_h]_e$ as an approximation to
  $\overline{(\partial\psi/\partial t_e)}^{d_e}$. The same order of
  accuracy can still be achieved if we allow the intersection of the
  primary cell edge $l_e$ and the dual cell edge $d_e$ to depart from
  their mid-points by no more than $O(h^2)$. 
\end{remark}

\subsection{Convergence of the MAC scheme}
The vorticity formulation \eqref{eq:95} is most suitable for
discretization on staggered grids, and this is the form that we will
use. In discretizing the system, it is important to ensure that
the external forcing $\fb$ is also discretized in a consistent way.
For the sake of the convergence proof later on, we discretize the
forcing term using its scalar stream and potential functions. For each
$\fb\in L^2(\Omega)\times L^2(\Omega)$, we let $\psi^f\in H_0^1(\Omega)$
and $\phi^f\in H^1(\Omega)/\mathbb{R}$ be such that 
\begin{equation}
\label{eq:168}
  \fb = \nabla^\perp \psi^f + \nabla \phi^f.
\end{equation}
By the famous Helmholtz decomposition theorem, the stream and
potential functions always exist and are unique, for each vector field
$\fb$ in $L^2(\Omega)\times L^2(\Omega)$ (see
\cite{Girault:1986vn}). The stream and potential functions are
discretized on the dual and primary meshes, respectively, by
averaging,
\begin{align}
  \psi^f_h &= \sum_{\nu=1}^{N_v} \psi^f_\nu\chi_\nu,& \textrm{with }
  \psi^f_\nu &= \overline{\psi^f}^{A_\nu},\label{eq:152}\\
  \phi^f_h &= \sum_{i=1}^{N_c+N_{cb}} \phi^f_i \chi_i,& \textrm{with }
  \phi^f_i &= \overline{\phi^f}^{A_i}.\label{eq:153}
\end{align}
Employing the technique of approximation by smooth functions and 
the Taylor's series expansion, we can show that the discrete scalar
fields converge to the corresponding continuous fields in the
$L^2$-norm, i.e.
\begin{align}
  \psi^f_h&\longrightarrow \psi^f & &\textrm{strongly in
  }L^2(\Omega),\label{eq:154} \\
\phi^f_h & \longrightarrow \phi^f & &\textrm{strongly in } L^2(\Omega).\label{eq:155}
\end{align}
With $\psi^f_h$ and $\phi^f_h$ defined as in \eqref{eq:152} and
\eqref{eq:153}, a discrete vector field can be specified, 
\begin{equation}
  \label{eq:109}
   f_h = \tilde\nabla_h^\perp\psi^f_h + \nabla_h \phi^f_h.
\end{equation}
We take $f_h$ as the discretization of the continuous vector forcing
field $\fb$.

The discrete problem can now be stated as follows. 
\begin{quote}
 {\itshape For each $\fb\in L^2(\Omega)\times L^2(\Omega)$, let $f_h$ be defined as
   in \eqref{eq:109}. Find $u_h\in V_h$ and $p_h \in \Phi_h$ such that 
   \begin{equation}
     \label{eq:134}
     -[\tilde\nabla_h^\perp\tilde\nabla_h\times u_h]_e + [\nabla_h
     p_h]_e = f_e,\qquad 1\leq e\leq N_e.
   \end{equation}
}
\end{quote}
The incompressibility condition and the homogeneous boundary
conditions on $u_h$ have been included in the specification of the
space $V_h$. It is important to note that equation \eqref{eq:134}
holds on interior edges only. On boundary edges, the computation of
$\tilde\nabla^\perp_h\tilde\nabla_h\times u_h$ will require boundary
conditions for $\tilde\nabla_h\times u_h$, which are not available a
priori.

As for the continuous problem, we multiply \eqref{eq:134} by
$v_h\in V_h$ and integrate by parts, and noticing that $v_h=0$ along
the boundary (see also Lemma \ref{lem:integ-by-parts}, and the remarks
following its proof), we obtain the
variational form of the numerical scheme,
\begin{equation}
  \label{eq:135}
  (\tilde\nabla_h\times u_h,\, \tilde\nabla_h \times v_h) = 2(f_h,\,v_h).
\end{equation}
The term involving the pressure $p_h$ vanishes because of the
incompressibility condition on $v_h$. The factor 2 on the right-hand
side of \eqref{eq:135}  results from the integration-by-parts
process. It can also be directly explained by the fact that the 
inner product on the right-hand side only involves the normal
components of the vector fields. For $u_h,\,v_h\in V_h$, we define the
bilinear form
\begin{equation}
  \label{eq:136}
  a_h(u_h,\,v_h) = (\tilde\nabla_h\times u_h,\, \tilde\nabla_h\times v_h).
\end{equation}
Then the variational form of the numerical scheme can be stated as
follows.
\begin{quote}
 {\itshape For each $\fb\in L^2(\Omega)\times L^2(\Omega)$, let $f_h$ be defined as
   in \eqref{eq:109}. Find $u_h\in V_h$ such that 
   \begin{equation}
\label{eq:137}
a_h(u_h,\,v_h) = 2(f_h,\,v_h),\qquad\forall v_h\in V_h.
   \end{equation}
}
\end{quote}

Given the norm \eqref{eq:108} on $V_h$, the bilinear form
$a_h(\cdot,\,\cdot)$ is coercive. Thus by Lax-Milgram theorem, for
every discrete vector field $f_h$, there exists a unique $u_h\in V_h$ such that
\eqref{eq:137} holds. Noticing that $v_h = 0$ on edges that intersects
with the boundary, we integrate the left-hand side of \eqref{eq:137}
by parts to obtain
\begin{equation}
\label{eq:167}
  \left( - \tilde\nabla_h^\perp \tilde\nabla_h \times u_h - f_h,\,
    v_h\right) = 0,\qquad \forall v_h\in V_h.
\end{equation}
We let $v_h = \tilde\nabla_h^\perp \psi_h$ for some $\psi_h\in
\Psi_h$ that vanishes on dual cells  that border the boundary. We
replace $v_h$ by $\tilde\nabla_h^\perp\psi_h$ in 
\eqref{eq:167}, and integrate by parts again to obtain
\begin{equation}
\label{eq:169}
  \left(\tilde\nabla_h\times( - \tilde\nabla_h^\perp \tilde\nabla_h
    \times u_h - f_h),\,     \psi_h\right) = 0.
\end{equation}
Thanks to the arbitrariness of $\psi_h$, equation \eqref{eq:169}
implies that 
\begin{equation}
\label{eq:171}
  \left[\tilde\nabla_h\times( - \tilde\nabla_h^\perp \tilde\nabla_h
    \times u_h - f_h)\right]_\nu = 0,\qquad\textrm{on interior dual
    cells.} 
\end{equation}
Following the same line of arguments as in the proof of Lemma
\ref{lem:irrotational}, we can show that there exists $p_h\in \Phi_h$,
unique up to a constant, such that 
\begin{equation}
  \label{eq:166}
 \left[ - \tilde\nabla_h^\perp \tilde\nabla_h
    \times u_h + \nabla_h p_h\right]_e =  f_e,\qquad\textrm{on
    interior edges.}
\end{equation}
Thus the pressure is recovered, and \eqref{eq:134} holds true.

\begin{remark}
The existence and uniqueness of a discrete solution to the system
\eqref{eq:134} can also be established from the point of view of a
square linear system. Indeed, 
in practice, the equations in \eqref{eq:134} are coupled with
the incompressibility constraints on $u_h$,
\begin{equation}
  \label{eq:150}
  [\nabla_h\cdot u_h]_i = 0,\qquad 1\leq i\leq N_c + N_{cb}.
\end{equation}
One of these equations is redundant, and should be dropped. Thus we
have $N_e + N_c + N_{cb} - 1 $ equations, for $N_e + N_c + N_{cb}$
unknowns ($u_e$'s with $1\leq e\leq N_e$ and $p_i$'s with $1\leq i\leq
N_c + N_{cb}$). There is one more unknown than the number of
equations, which is a reflection of the fact that if $p_h$ is a
solution of \eqref{eq:134}, then so is $p_h + c$ for any constant
$c$. To uniquely determine the pressure, we may impose an extra
constraint on $p_h$, such as
\begin{equation}
  \label{eq:151}
  \int_\Omega p_h dx = 0.
\end{equation}
The final system has $N_e + N_c + N_{cb}$ unknowns, and $N_e + N_c +
N_{cb}$ equations, and is a square linear system. For a finite
dimensional square linear system, uniqueness is equivalent to
solvability. Thus we can claim unique solvability for the system
\eqref{eq:134}, \eqref{eq:150} and \eqref{eq:151} once we show that
the only solutions 
corresponding to $f_h = 0$ is the trivial solution $u_h=0$ and
$p_h=0$. 
If $f_h=0$, then the only solution to \eqref{eq:137} is $u_h=0$,
thanks to the coercivity of the bilinear form $a_h(\cdot,\,\cdot)$. With
$u_h=0$ and $f_h=0$ in \eqref{eq:134}, we derive that $\nabla_h p_h =
0$, which means that $p_h$ is a constant over the entire domain. The
constraint \eqref{eq:151} implies that this constant must be
zero. Hence, for every $f_h$, the numerical scheme \eqref{eq:134} has
a unique solution.
\end{remark}

We now obtain a energy bound on the discrete solution in terms the
data. To this end, we set $v_h = u_h$ in \eqref{eq:137}, 
\begin{equation}
  \label{eq:156}
  |u_h|^2_{1,h} \equiv |\tilde\nabla_h\times u_h |_{0,h}^2 = 2(f_h, \, u_h).
\end{equation}
To estimate the right-hand side, we substitute \eqref{eq:109} for
$f_h$, and integrate by parts using formulae \eqref{eq:41} and
\eqref{eq:42} to obtain
\begin{equation}
  \label{eq:164}
  |u_h|_{1,h}^2 = -(\tilde\nabla_h\times u_h,\,\psi^f_h).
\end{equation}
The term involving $\phi^f_h$ has vanished due to the
incompressibility condition on $u_h$. An simple application of the
Cauchy-Schwarz inequality yields
\begin{equation}
  \label{eq:165}
  |u_h|_{1,h} \leq C|\psi^f_h|_0.
\end{equation}
Combining this equation with the fact that $\psi^f_h$
converges to $\psi^f$ as $h$ converges to zero, we
derive that
\begin{equation}
  \label{eq:158}
  |u_h|_{1,h} \leq C|\psi^f|_0 + K,
\end{equation}
where $C$ and $K$ are constants that are independent of $h$. 

The results are summarized in the
 following theorem.
 \begin{theorem}\label{thm:existence}
   For each $\fb\in L^2(\Omega)\times L^2(\Omega)$, let $f_h$ be defined as in
 \eqref{eq:109}. There exists a unique $u_h\in V_h$, and a
  $p_h\in\Phi_h$, unique up to a constant, such that \eqref{eq:134}
  holds. In addition, the discrete solution $u_h$ is bounded,
  \begin{equation}
    \label{eq:140}
    |u_h|_{1,h} \leq C|\psi^f|_0 + K,
  \end{equation}
where $C$ and $K$ are constants independent of the grid resolution
$h$. 
\end{theorem}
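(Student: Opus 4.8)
The plan is to assemble the three assertions — unique solvability for $u_h$, recovery of $p_h$ up to an additive constant, and the energy bound — directly from the variational reformulation \eqref{eq:137} and the discrete calculus already developed in Sections \ref{sec:appr-vect-fields} and \ref{sec:line-incompr-stok}. Since the discrete problem \eqref{eq:134} has been shown equivalent to \eqref{eq:137}, essentially all the work will be done on the variational level.

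First I would establish existence and uniqueness of $u_h$ by the Lax--Milgram theorem applied to the bilinear form $a_h(\cdot,\cdot)$ of \eqref{eq:136} on the Hilbert space $V_h$ equipped with the norm \eqref{eq:108}. Coercivity is immediate, since $a_h(u_h,u_h)=|\tilde\nabla_h\times u_h|_{0,h}^2=\|u_h\|_{V_h}^2$; boundedness of the linear functional $v_h\mapsto 2(f_h,v_h)$ follows from the Cauchy--Schwarz inequality together with the discrete Poincar\'e inequality of Lemma \ref{lem:disc-poincare-vector}, which controls $|v_h|_{0,h}$ by $\|v_h\|_{V_h}$ (here $f_h$ is a fixed discrete vector field, so $|f_h|_{0,h}$ is a constant). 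This produces a unique $u_h\in V_h$ solving \eqref{eq:137}, and hence \eqref{eq:135}.

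Next I would recover the pressure. Starting from \eqref{eq:137}, I integrate the left-hand side by parts (Lemma \ref{lem:integ-by-parts}, in the form valid when $v_h$ vanishes on edges meeting the boundary) to obtain \eqref{eq:167}; specializing to $v_h=\tilde\nabla_h^\perp\psi_h$ for $\psi_h\in\Psi_h$ vanishing on boundary dual cells and integrating by parts once more via \eqref{eq:42} gives \eqref{eq:169}, and the arbitrariness of $\psi_h$ yields the pointwise identity \eqref{eq:171} on interior dual cells. Thus $-\tilde\nabla_h^\perp\tilde\nabla_h\times u_h-f_h$ has vanishing discrete curl on every interior dual cell, and the necessity argument of Lemma \ref{lem:irrotational} produces $p_h\in\Phi_h$ with $\nabla_h p_h$ equal to this field on interior edges, i.e.\ \eqref{eq:166}, hence \eqref{eq:134}. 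Uniqueness of $p_h$ up to a constant is precisely the statement that the kernel of $\nabla_h$ on $\Phi_h$ consists of constants, which also follows from Lemma \ref{lem:irrotational} (equivalently, from the scalar discrete Poincar\'e inequality on $\dot\Phi_h$). Finally, for the energy bound I test \eqref{eq:137} with $v_h=u_h$, obtaining $|u_h|_{1,h}^2=2(f_h,u_h)$, substitute the Helmholtz decomposition \eqref{eq:109} of $f_h$, and integrate by parts with \eqref{eq:41}--\eqref{eq:42}: the potential part $\nabla_h\phi^f_h$ drops because $\nabla_h\cdot u_h=0$, leaving $|u_h|_{1,h}^2=-(\tilde\nabla_h\times u_h,\psi^f_h)$; Cauchy--Schwarz then gives $|u_h|_{1,h}\le C|\psi^f_h|_0$, and the strong convergence \eqref{eq:154} of $\psi^f_h$ to $\psi^f$ implies $\sup_h|\psi^f_h|_0<\infty$, so $|\psi^f_h|_0\le|\psi^f|_0+K$ with $K$ independent of $h$. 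This is \eqref{eq:140}.

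The computations here are routine; the one place warranting care is the pressure recovery, where one must be scrupulous about which discrete fields may carry nonzero values on boundary edges and boundary dual cells. The composite operator $\tilde\nabla_h^\perp\tilde\nabla_h\times u_h$ is only defined on interior edges, because $\tilde\nabla_h\times u_h$ has no prescribed boundary data, so Lemma \ref{lem:irrotational} must be invoked in its interior form and the recovered identity \eqref{eq:134} asserted only on interior edges — exactly as in the discussion preceding the theorem. I expect this bookkeeping, rather than any genuine analytic difficulty, to be the main thing to get right; everything else follows from Lax--Milgram, the integration-by-parts formulae, the discrete Poincar\'e inequalities, and the $L^2$-convergence of the averaged forcing potentials.
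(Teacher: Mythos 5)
Your proposal is correct and follows essentially the same route as the paper: Lax--Milgram for $u_h$ via coercivity of $a_h(\cdot,\cdot)$ in the norm \eqref{eq:108}, pressure recovery by double integration by parts and the necessity argument of Lemma \ref{lem:irrotational} on interior dual cells, and the energy bound by testing with $v_h=u_h$, using the Helmholtz decomposition \eqref{eq:109} so the potential part drops, then Cauchy--Schwarz and the convergence \eqref{eq:154}. Your attention to the boundary bookkeeping in the pressure-recovery step matches the paper's own discussion preceding \eqref{eq:166}.
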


We next show that the discrete solution $u_h$ of \eqref{eq:137}
converges, and the limit is a solution of the continuous problem
\eqref{eq:96}. 
\begin{theorem}\label{thm:convergence}
  For each $\fb\in L^2(\Omega)\times L^2(\Omega)$, let $f_h$ be defined as in
  \eqref{eq:109}, and let $u_h$ be the unique solution of
  \eqref{eq:137}. Then there exists a unique $\ub\in V$ such that, as
  the grid resolution refines, 
  \begin{equation}
    \label{eq:141}
    \Ph u_h \longrightarrow \Pi\ub\qquad\textrm{strongly in } F,
  \end{equation}
and $\ub$ solves the variational problem \eqref{eq:96}.
\end{theorem}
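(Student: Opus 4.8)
The plan is to establish Theorem~\ref{thm:convergence} via the standard external-approximation machinery, using the a priori bound of Theorem~\ref{thm:existence} to extract a weakly convergent subsequence, identifying its limit through Condition (C2), upgrading to strong convergence, and finally verifying that the limit solves \eqref{eq:96}. First I would invoke Theorem~\ref{thm:existence} to obtain $|u_h|_{1,h}\le C|\psi^f|_0+K$ uniformly in $h$, which by the definitions \eqref{eq:108} and \eqref{eq:114} means the sequence $\{\Ph u_h\}$ is bounded in $F=L^2(\Omega)\times L^2(\Omega)$: indeed $\tilde\nabla_h\times u_h$ is bounded in $L^2$ by construction, and $\psi_h$ (where $u_h=\tilde\nabla_h^\perp\psi_h$) is bounded in $L^2$ by the discrete Poincar\'e inequality of Lemma~\ref{lem:disc-poincare-scalar}. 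Hence, along a subsequence, $\Ph u_h\rightharpoonup(\psi,\omega)$ weakly in $F$. By Condition (C2) of Theorem~\ref{thm:stable-conv-h1_0}, there is a (unique) $\ub\in V$ with $(\psi,\omega)=\Pi\ub$, i.e.\ $\ub=\nabla^\perp\psi$ and $\omega=\nabla\times\ub$.

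Next I would show $\ub$ solves the continuous variational problem \eqref{eq:96}. The idea is to pass to the limit in the discrete variational identity \eqref{eq:137}. Fix $\vb\in\mathcal{V}$ (the dense subspace \eqref{eq:170}) and let $v_h=\Rh\vb=\tilde\nabla_h^\perp(\text{interpolant of its stream function})$. By Condition (C1) applied to $\vb$, we have $\Ph v_h=(\psi^v_h,\tilde\nabla_h\times v_h)\to\Pi\vb=(\psi^v,\nabla\times\vb)$ strongly in $F$. On the left side of \eqref{eq:137}, $a_h(u_h,v_h)=(\tilde\nabla_h\times u_h,\tilde\nabla_h\times v_h)$ is a product of a weakly convergent factor ($\tilde\nabla_h\times u_h\rightharpoonup\omega=\nabla\times\ub$) and a strongly convergent factor ($\tilde\nabla_h\times v_h\to\nabla\times\vb$), so it converges to $(\nabla\times\ub,\nabla\times\vb)$. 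On the right side, $2(f_h,v_h)$: writing $v_h=\tilde\nabla_h^\perp\psi^v_h$ and integrating by parts via \eqref{eq:41}--\eqref{eq:42} exactly as in the derivation of \eqref{eq:164}, the $\nabla_h\phi^f_h$ term drops (discrete divergence-free $v_h$) and one is left with $-(\tilde\nabla_h\times v_h,\psi^f_h)$, which converges to $-(\nabla\times\vb,\psi^f)=-\int_\Omega(\nabla\times\vb)\psi^f\dx$; integrating by parts in the continuum and using the Helmholtz decomposition \eqref{eq:168} this equals $(\fb,\vb)$. Hence $(\nabla\times\ub,\nabla\times\vb)=(\fb,\vb)$ for all $\vb\in\mathcal{V}$, and by density for all $\vb\in V$, so $\ub$ solves \eqref{eq:96}. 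Since \eqref{eq:96} has a unique solution (coercivity of $(\nabla\times\cdot,\nabla\times\cdot)$ on $V$ via the norm \eqref{eq:106}), the limit $\ub$ is independent of the subsequence, so the whole sequence $\Ph u_h$ converges weakly to $\Pi\ub$.

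Finally I would upgrade weak convergence to the strong convergence \eqref{eq:141}. For the vorticity component, test \eqref{eq:137} with $v_h=u_h$ to get $|\tilde\nabla_h\times u_h|_{0,h}^2=2(f_h,u_h)=-(\tilde\nabla_h\times u_h,\psi^f_h)$ as in \eqref{eq:164}; the right side converges (weak times strong) to $-(\omega,\psi^f)=-\int_\Omega\omega\psi^f\dx$, which by the continuum identity equals $(\fb,\ub)=(\nabla\times\ub,\nabla\times\ub)=|\nabla\times\ub|_0^2=|\omega|_0^2$. Thus $|\tilde\nabla_h\times u_h|_{0,h}\to|\omega|_0$, and norm convergence plus weak convergence in the Hilbert space $L^2(\Omega)$ gives $\tilde\nabla_h\times u_h\to\omega$ strongly. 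For the stream-function component, I expect to need a separate argument: the discrete elliptic problem $\tilde\nabla_h\times\tilde\nabla_h^\perp\psi_h=\tilde\nabla_h\times u_h$ together with the just-established strong convergence of the right-hand side and a discrete $H^2$-type a priori estimate (or, equivalently, the discrete elliptic regularity implicit in the consistency relation \eqref{eq:132} read backwards) yields strong $L^2$ convergence $\psi_h\to\psi$. \textbf{The main obstacle} I anticipate is exactly this last point: obtaining strong convergence of $\psi_h$ rather than merely weak, since there is no coercivity producing a $\psi_h$-norm directly from \eqref{eq:137}; the cleanest route is probably to combine the strong convergence of $\Delta_h\psi_h$ with a discrete Poincar\'e/elliptic bound $|\psi_h|_{0,h}\le C|\Delta_h\psi_h|_{0,h}$ (available from Lemmas~\ref{lem:disc-poincare-scalar} and the integration-by-parts formulae) applied to differences, mimicking the continuum fact that $\Delta:H^2_0\to L^2$ has bounded inverse, and then identify the limit using \eqref{eq:162}.
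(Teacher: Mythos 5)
Your proposal is correct and follows essentially the same route as the paper: a priori bound, weak subsequential limit identified through (C2), passage to the limit in \eqref{eq:137} against $\Rh\vb$ for $\vb\in\mathcal{V}$ using (C1) and the stream-function form of the forcing, uniqueness of the Stokes solution to upgrade to whole-sequence convergence, and an energy identity to pass from weak to strong convergence. The one place you flag as a possible obstacle --- strong $L^2$ convergence of the stream functions $\psi_h$ --- is closed in the paper by exactly the mechanism you gesture at: it shows $|u_h-\Rh\ub|_{1,h}^2=a_h(u_h,u_h)+a_h(\Rh\ub,\Rh\ub)-2a_h(u_h,\Rh\ub)\to(\fb,\ub)+(\fb,\ub)-2(\fb,\ub)=0$ and then bounds $\|\Ph u_h-\Pi\ub\|_F\le\|\Ph\|\,|u_h-\Rh\ub|_{1,h}+\|\Ph\Rh\ub-\Pi\ub\|_F$, where the stability of $\Ph$ (i.e.\ the discrete elliptic bound $|\psi_h|_{0,h}\le C|\tilde\nabla_h\times\tilde\nabla_h^\perp\psi_h|_{0,h}$ from \eqref{eq:91}) applied to the difference $u_h-\Rh\ub$ is precisely your ``elliptic bound applied to differences,'' and the second term vanishes by Theorem \ref{thm:stable-conv-h1_0}. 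So there is no gap; your separate treatment of the vorticity component (norm convergence plus weak convergence in a Hilbert space) is an equivalent repackaging of the same energy computation.
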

\begin{proof}
  We first show that the discrete solutions $u_h$ converge. By the
  boundedness \eqref{eq:140} of $u_h$, there exists $(\psi,\,\omega) \in F$ and
  a subsequence $u_{h'}$ such that, as the grid resolution refines,
  \begin{equation}
    \label{eq:142}
    \mathcal{P}_{h'} u_{h'} \equiv (\psi_{h'},\,\tilde\nabla_{h'}\times u_{h'}) 
    \rightharpoonup (\psi,\,\omega)\quad\textrm{weakly in } F.
  \end{equation}
By the (C2) condition for a convergent approximation, there exists
$\ub\in V$ such that
\begin{equation*}
  (\psi,\,\omega) = \Pi\ub.
\end{equation*}

We next show that $\ub$ solves the continuous variational problem
\eqref{eq:96}. We let $\vb\in\mathcal{V}\subset V$, $v_{h'} = \mathcal{R}_{h'}
\vb$.
By the (C1) condition for a convergent
approximation, 
\begin{equation}
  \label{eq:143}
  \mathcal{P}_{h'} \mathcal{R}_{h'} \vb \equiv (\tilde\psi_{h'},\,\tilde\nabla_{h'}\times v_{h'}) \longrightarrow
  (\tilde\psi,\,\nabla\times\vb)\quad\textrm{strongly in } F.
\end{equation}
The discrete variational problem \eqref{eq:137} holds with
these $v_{h'}$ as the test functions,
\begin{equation}
  \label{eq:145}
  (\tilde\nabla_{h'}\times u_{h'},\,\tilde\nabla_{h'}\times v_{h'}) =
  2(f_{h'},\,v_{h'}). 
\end{equation}
Replacing the $f_{h'}$ on the right-hand side by
$\tilde\nabla_{h'}^\perp\psi^f_{h'} + \nabla_{h'}\phi^f_{h'}$, and
integrating by parts, we 
obtain
\begin{equation}
  \label{eq:146}
  (\tilde\nabla_{h'}\times u_{h'},\,\tilde\nabla_{h'}\times v_{h'}) =
  -(\psi^f_{h'},\,\tilde\nabla_{h'}\times v_{h'}).  
\end{equation}
In view of the convergences \eqref{eq:154}, \eqref{eq:142},
and \eqref{eq:143}, we pass to the limit in
\eqref{eq:146} by letting $h'\longrightarrow 0$, and obtain
\begin{equation}
  \label{eq:147}
  (\nabla\times\ub,\,\nabla\times\vb) = -(\psi^f,\,\nabla\times\vb).
\end{equation}
Integrating by parts again on the right-hand side yields
\begin{equation}
  \label{eq:148}
  (\nabla\times\ub,\,\nabla\times\vb) = (\fb,\,\vb). 
\end{equation}
Since $\mathcal{V}$ is dense in $V$, the above holds for every $\vb\in
V$, which confirms that $\ub$ is a solution of \eqref{eq:96}.

Finally, we show that the convergence \eqref{eq:142} holds for the
whole sequence $u_h$, and in the strong topology of $F$. The solution
$\ub$ of \eqref{eq:96} is necessarily unique. Then by a contradiction
argument, the convergence \eqref{eq:142} must hold for the entire
sequence of $u_h$. We now examine the difference $u_h - \Rh\ub$ in the
semi-$H^1$ norm of $V_h$.
\begin{align*}
  |u_h - \Rh \ub|_{1,h}^2 &= a_h(u_h - \Rh\ub,\, u_h-\Rh\ub)\\
&= a_h(u_h,\,u_h) + a_h(\Rh\ub,\,\Rh\ub) - 2 a_h(u_h,\,\Rh\ub)\\
&= 2(f_h,\,u_h) + (\tilde\nabla_h\times
\Rh\ub,\,\tilde\nabla_h\times \Rh\ub) - 4(f_h,\,\Rh\ub)  
\end{align*}
We let $\hat\psi_h\in \Psi_h$ 
be such that 
\begin{align*}
  \Rh\ub &= \tilde\nabla_h^\perp \hat\psi_h.\\
\end{align*}
Then, according to Theorem \ref{thm:stable-conv-h1_0},
\begin{equation}
  \label{eq:159}
\Ph \Rh\ub \equiv (\hat\psi_h,\,\tilde\nabla_h\times
\Rh\ub)\longrightarrow (\psi,\,\omega) = \Pi\ub\quad\textrm{strongly
  in } F.
\end{equation}
This relation, together with \eqref{eq:154} and \eqref{eq:142}, imply
that 
\begin{align*}
2(f_h,\,u_h) = -(\psi^f_h,\,\nabla_h\times u_h) &\longrightarrow
-(\psi^f_h,\,\nabla\times\ub) = (\nabla^\perp\psi^f,\,\ub) = (\fb,\,\ub),\\
4(f_h,\,\Rh\ub) = -2(\psi^f_h,\,\tilde\nabla_h\times \Rh\ub)
&\longrightarrow -2(\psi^f,\,\nabla\times\ub) =
2(\nabla^\perp\psi^f,\,\ub) = 2(\fb,\,\ub).
\end{align*}
The strong convergence of $\tilde\nabla_h\times \Rh\ub$ to
$\nabla\times\ub$ in $L^2(\Omega)$ also implies that 
\begin{align*}
  (\tilde\nabla_h\times
\Rh\ub,\,\tilde\nabla_h\times \Rh\ub) &\longrightarrow
(\nabla\times\ub,\,\nabla\times\ub).
\end{align*}
Hence we have
\begin{equation}
  |u_h - \Rh \ub|_{1,h}^2  \longrightarrow
  (\nabla\times\ub,\,\nabla\times\ub) - (\fb,\,\ub) = 0.\label{eq:149}
\end{equation}
The convergence \eqref{eq:141} follows from \eqref{eq:149} and the
following observation,
\begin{align*}
  |\Ph u_h - \Pi\ub|_0 &\leq |\Ph u_h - \Ph \Rh\ub|_0 + |\Ph \Rh\ub -
  \Pi\ub|_0\\
&\leq |\Ph|\cdot |u_h - \Rh\ub|_{1,h}  + |\Ph \Rh\ub - \Pi\ub|_0. 
\end{align*}
\end{proof}

\section{Concluding remarks}\label{sec:concluding-remarks}

In this article, we present a new framework for analyzing
staggered-grid schemes on unstructured meshes. The framework employs
the concept of external approximation to address the challenge that
comes with the use of piecewise constant functions in FD/FV
schemes. The framework uses vorticity and/or divergence to gauge the
convergence of the numerical schemes. Vorticity and divergence are two
fundamental quantities of fluid dynamics, and the performance of
numerical schemes in approximating these quantities is of great
interest, both theoretically and practically. In this work, we
demonstrate the construction and analysis of an external
approximation of the vector-valued function space
$H^\div_0(\Omega)\cap H^\curl(\Omega)$. The external approximation is
shown to be stable and convergent under the general orthogonal and
convex assumptions on the primary and dual meshes. We also apply the
framework to 
prove that the discrete solutions of the MAC scheme for the classical
incompressible Stokes problem on unstructured meshes converge to the
true solution, under an extra assumption that the primary cell edge and
the dual cell edge nearly bisect each other. More precisely, the
conclusion remains valid if the point of intersection between the
primary cell edge and the dual cell edge departs from their mid-points
by at most $O(h^2)$.

It is not known whether the just mentioned convergence result for the
Stokes problem still
holds without the bisection assumption at all. It would be a highly
desirable outcome if the assumption can be further
weakened so that only the primary cell edge bisects the dual cell edge
(or the other way around). In that case, the theoretical result will cover a
wider range of meshes, including the famous Delaunay-Voronoi
tessellations (\cite{Du:1999gs}). 

The current work is motivated by our study of the staggered-grid
schemes for the shallow water equations (\cite{Chen:2013bl},
\cite{Chen:QhglRipC}). So far, studies on this topic, including ours,
have largely been computational and experimental (\cite{Arakawa:1977wr},
\cite{Randall:1994uc}, \cite{Ringler:2010io},
\cite{LeRoux:2012ca}). Theoretical study, to 
establish the existence, uniqueness, and convergence of the discrete
solutions, is vital to ensure that the schemes perform under the most
general conditions. We believe that the framework presented in this
work is suitable for this task. So far, this framework has only been
applied to the incompressible Stokes problem. In order to apply the
framework to nonlinear problems, we envision that new results and new
techniques must be developed, such as the compactness of the discrete
function spaces. To facilitate such development, and to make progress
towards our ultimate goal, we will study a hierarchy of fluid models,
with increasing complexity and relevance to geophysical
flows, such as the stationary Navier-Stokes equations, the
compressible Stokes problem, etc. Work on these models will be
reported in future publications.

\section*{Acknowledgment}
The author thanks anonymous reviewers for their constructive comments
and their suggestions of references. 
The author also  warmly acknowledges helpful discussions with Lili Ju.
This work was in part supported by a grant from the Simons
Foundation (\#319070 to Qingshan Chen). 

\section*{References}
\bibliography{references}

\end{document}